\def\titlerunning#1{\gdef\titrun{#1}}
\def\author#1{\gdef\autrun{\def\and{\unskip, }#1}\gdef\@author{#1}}
\def\address#1{{\def\and{\\\hspace*{18pt}}\renewcommand{\thefootnote}{}%
		\footnote {#1}}%
	\markboth{\autrun}{\titrun}}
\def\email#1{e-mail: #1}
\def\subjclass#1{{\renewcommand{\thefootnote}{}%
		\footnote{\emph{Mathematics Subject Classification (2010):} #1}}}
\def\keywords#1{\par\medskip
	\noindent\textbf{Keywords.} #1}
\newtheorem{theorem}{Theorem}[section]
\newtheorem{lemma}[theorem]{Lemma}
\newtheorem{definition}[theorem]{Definition}
\newtheorem{proposition}[theorem]{Proposition}
\newtheorem{remark}[theorem]{Remark}
\newcommand{\R}{\mathbb{R}}
\newcommand{\ci}{\mathbb{S}}
\numberwithin{equation}{section}
\newcommand{\PreserveBackslash}[1]{\let\temp=\\#1\let\\=\temp}
\newcolumntype{C}[1]{>{\PreserveBackslash\centering}p{#1}}
\newcolumntype{R}[1]{>{\PreserveBackslash\raggedleft}p{#1}}
\newcolumntype{L}[1]{>{\PreserveBackslash\raggedright}p{#1}}
\newcolumntype{I}{!{\vrule width 1pt}}
\newlength\savedwidth
\begin{document}
	
	
	\baselineskip=16pt
	
	
	\titlerunning{Time-periodic solutions of contact Hamilton-Jacobi equations}
	
	\title{Time-periodic solutions of contact Hamilton-Jacobi equations on the circle}
	
	\author{Kaizhi Wang \and Jun Yan \and Kai Zhao}

	\date{\today}
	
	\maketitle
	
	\address{Kaizhi Wang: School of Mathematical Sciences, Shanghai Jiao Tong University, Shanghai 200240, China; \email{kzwang@sjtu.edu.cn}
		\and Jun Yan: School of Mathematical Sciences, Fudan University, Shanghai 200433, China;
		\email{yanjun@fudan.edu.cn}
		\and
		Kai Zhao:  School of Mathematical Sciences, Fudan University, Shanghai 200433, China;
		\email{zhao$\_$kai@fudan.edu.cn}}
	\subjclass{37J50; 35F21; 35D40}

\begin{abstract}
	We are concerned with the existence and multiplicity of nontrivial time-periodic viscosity solutions to
\[
\partial_t w(x,t) + H( x,\partial_x w(x,t),w(x,t) )=0,\quad  (x,t)\in \mathbb{S} \times [0,+\infty). 
\]
We find that there are infinitely many nontrivial time-periodic viscosity solutions with different periods when $\frac{\partial H}{\partial u}(x,p,u)\leqslant-\delta<0$ by analyzing the asymptotic behavior of the dynamical system $(C(\ci ,\R),\{T_t\}_{t\geqslant 0})$, where $\{T_t\}_{t\geqslant 0}$ was introduced in \cite{WWY1}. Moreover, in view of the convergence of $T_{t_n}\varphi$, we get the existence of nontrivial periodic points of $T_t$, where $\varphi$ are  initial data satisfying certain properties. This is a long-time behavior result for the solution to the above equation with initial data $\varphi$.  At last, as an application, we describe to readers a bifurcation phenomenon for 
\[
\partial_t w(x,t) + H( x,\partial_x w(x,t),\lambda w(x,t) )=0,\quad  (x,t)\in \mathbb{S} \times [0,+\infty), 
\] 
when the sign of the parameter $\lambda$ varies. The structure of the unit circle $\ci$ plays an essential role here. The most important novelty is the discovery of the  nontrivial  recurrence of $(C(\ci ,\R),\{T_t\}_{t\geqslant 0})$.

\keywords{ Hamilton-Jacobi equations, nontrivial periodic solutions, bifurcation}
\end{abstract}

\tableofcontents

\section{Introduction and main results}
Throughout this paper, we use the symbol $M$ to denote an arbitrary smooth, connected, compact Riemannian manifold without boundary. Let $TM$ and $T^*M$ denote the tangent and cotangent bundle of $M$ respectively. Assume $H:T^*M\times\mathbb{R}\to \mathbb{R}$, $H=H(x,p,u)$, is a $C^\infty$ function satisfying:
\begin{itemize}
	\item [\bf(H1)] the Hessian $\frac{\partial^2 H}{\partial p^2} (x,p,u)$ is positive definite for each $(x,p,u)\in T^*M\times\R$;
	\item  [\bf(H2)] for each $(x,u)\in M\times\R$, $H(x,p,u)$ is superlinear in $p$;
	\item [\bf(H3)] there is a constant $\kappa >0 $  such that
	$$
	\Big| \frac{\partial H}{\partial u}(x, p,u)\Big| \leqslant \kappa ,\quad \forall (x,p,u)\in T^*M\times \R.
	$$
\end{itemize}
The associated Lagrangian is defined by
$$
L(x,\dot x,u):=\sup_{p\in T_x^*M} \{ \langle \dot x, p\rangle_x -H(x,p,u) \},\quad (x,\dot{x},u)\in TM\times\R.
$$
It is direct to check that $L$ satisfies
 \begin{itemize}
	\item [\bf(L1)] the Hessian $\frac{\partial^2 L}{\partial \dot{x}^2} (x,\dot{x},u)$ is positive definite for each $(x,\dot{x},u)\in TM\times\R$;
	\item  [\bf(L2)] for each $(x,u)\in M\times\R$, $L(x,\dot{x},u)$ is superlinear in $\dot{x}$;
	\item [\bf(L3)] there is a constant $\kappa >0 $  such that
	$$
	\Big| \frac{\partial L}{\partial u}(x, \dot{x},u)\Big| \leqslant \kappa ,\quad \forall (x,\dot{x},u)\in TM\times \R.
	$$
\end{itemize}
Define a family of nonlinear operators $\{T_t\}_{t\geqslant 0}$ from $C(M,\mathbb{R})$ to itself as follows. For each $\varphi\in C(M,\mathbb{R})$, denote by $(x,t)\mapsto T_t\varphi(x)$ the unique continuous function on $ (x,t)\in M\times[0,+\infty)$ such that
\begin{equation*}\label{eq:semigroup}
	T_t\varphi(x)=\inf_{\gamma}\left\{\varphi(\gamma(0))+\int_0^tL(\gamma(\tau),\dot{\gamma}(\tau),T_\tau\varphi(\gamma(\tau)))d\tau\right\},
\end{equation*}
where the infimum is taken among the absolutely continuous curves $\gamma:[0,t]\to M$ with $\gamma(t)=x$. It was  proved  in \cite{WWY1} that $\{T_t\}_{t\geqslant 0}$ is well-defined and is a one-parameter semigroup of operators.

Consider the dynamical system $(C(M,\R),\{T_t\}_{t\geqslant 0})$. The study of this dynamical system involves the space of continuous functions on $M$ ($C(M,\R)$), time ($t\geqslant 0$ parametrizes an irreversible continuous-time process), and the time-evolution law ($\{T_t\}_{t\geqslant 0}$). A  characteristic feature of dynamical theories is the emphasis on asymptotic behavior, especially in the presence of nontrivial recurrence, i.e., properties related to the behavior as time goes to infinity. This paper is devoted to the study of asymptotic behavior of the dynamical system $(C(M,\R),\{T_t\}_{t\geqslant 0})$. More precisely, we aim to search for nontrivial periodic points $\varphi$ of $T_t$, i.e., there is $\mathcal{T}_0>0$ such that $\varphi=T_{\mathcal{T}_0}\varphi$. Note that such a $\mathcal{T}_0> 0$ may be not unique. If the infimum of the set of all such $\mathcal{T}_0$'s is not 0, we say that $\varphi$ is a nontrivial periodic point of  $T_t$.

{\it We are interested in the nontrivial recurrence of the dynamical system $(C(M,\R),\{T_t\}_{t\geqslant 0})$. }

\subsection{What we have known and want to know}
Since the function  $(x,t)\mapsto T_t\varphi(x)$ is the unique viscosity solution (see \cite{CL,L} for definitions of viscosity solutions) of the evolutionary Hamilton-Jacobi equation
\begin{equation}\label{eq:HJe1}
	\partial_t w(x,t) +  H( x,\partial_x w(x,t), w(x,t) )=0, \quad (x,t)\in M \times [0,+\infty)
\end{equation}
with $w(\cdot,0)=\varphi(\cdot)$ \cite{WWY1}, then one can deduce that 
\begin{itemize}
	\item $\varphi\in C(M,\R)$ is a common fixed point of $\{T_t\}_{t\geqslant 0}$ if and only if it is a viscosity solution of the ergodic Hamilton-Jacobi equation
	\begin{equation}\label{gen}
		H(x,Du(x),u(x))=0, \quad x\in M.
	\end{equation}
In this case, $\varphi$ is a trivial periodic point of $T_t$, or equivalently, it is a trivial periodic viscosity solution of \eqref{eq:HJe1}.
\item  $\varphi\in C(M,\R)$ is a nontrivial periodic point of $T_t$ if and only if the function  $(x,t)\mapsto T_t\varphi(x)$ is a nontrivial periodic viscosity solution of \eqref{eq:HJe1}.
\end{itemize}
Hence, nontrivial (resp. trivial) periodic points of $T_t$ and nontrivial (resp. trivial) time-periodic viscosity solutions of \eqref{eq:HJe1} are the same. We do not distinguish them in the following.

Let us recall what we have known about the asymptotic behavior of $T_t\varphi$: 
\begin{itemize}
	\item when $\frac{\partial H}{\partial u}=0$ ({\bf classical case}), there is a unique constant $c(H)\in\R$, called effective Hamiltonian \cite{LPV} or Ma\~n\'e's critical value \cite{Ma}, such that 
	\begin{align}\label{8-1}
		H(x,Du(x))=c(H)
	\end{align} 
has viscosity solutions. There are infinitely many viscosity solutions of \eqref{8-1}. For each $\psi\in C(M,\R)$, $T_t\psi+c(H)t$ goes to a viscosity solution of \eqref{8-1} as $t\to +\infty$ \cite{Fat-b}, which admits Lyapunov stability. Thus, there exists no nontrivial periodic viscosity solutions of 
\begin{align*}\label{8-2}
\partial_t w(x,t) +  H(x,\partial_x w(x,t))=c(H).
\end{align*}  
Moreover, each viscosity solution of \eqref{8-1} is Lyapunov asymptotically  stable.
\item  when $\frac{\partial H}{\partial u}\geqslant\delta>0$ ({\bf strict increasing} in $u$) , there is a unique fixed point $\varphi$ of $\{T_t\}_{t\geqslant 0}$ and for each $\psi\in C(M,\R)$, $T_t\psi$ goes to $\varphi$ as $t\to +\infty$. Furthermore, $\varphi$ admits Lyapunov stability. These are well-known results, see for example \cite{WWY2}.
\end{itemize}
For the above two cases, it is clear to see that the asymptotic behavior of $T_t$ is simple from the dynamical point of view. For the sake of brevity and readability, from now on we always assume that the Ma\~n\'e's critical value of $H(x,p,0)$ is 0.

We are concerned with 
\begin{itemize} 
	\item what will happen for more general  cases: will nontrivial periodic viscosity solutions of \eqref{eq:HJe1} appear? And how many? 
	
For example, assume $\frac{\partial H}{\partial u}\leqslant-\delta<0$ ({\bf strict decreasing} in $u$). 
	In this case, equation \eqref{gen} admits a unique forward weak KAM solution, denoted by $u_+$. See the Appendix for more details about weak KAM solutions. For this decreasing case, we have already known that \cite{WWY3}
	\begin{itemize}
		\item [(D1)]$T_t\varphi$ is bounded on $M \times[0,+\infty)$ if and only if 
		$\varphi \geq u_+$everywhere and 
		there exists $x_{0} \in M$ such that $\varphi\left(x_{0}\right)=u_+\left(x_{0}\right)$.
		In this case, for each $\varphi$ satisfying the above two conditions, there is $T_{\varphi}>0$ such that
		\[
		|T_t\varphi(x)|\leq K,\quad\forall  (x,t)\in M\times(T_{\varphi},+\infty)
		\]
		for some $K>0$ independent of $\varphi$.
		\item [(D2)]if there is $x_{0} \in M$ such that $\varphi\left(x_{0}\right)<u_+\left(x_{0}\right)$, then $\lim _{t \rightarrow+\infty} T_t\varphi=-\infty$ uniformly on $x \in M$.
		\item [(D3)] if $\varphi>u_{+}$ everywhere, then $\lim _{t \rightarrow+\infty} T_t\varphi=+\infty$ uniformly on $x \in M$.
	\end{itemize}
 \end{itemize}

\subsection{Main results}
In this paper we aim to study the existence and multiplicity of nontrivial time-periodic viscosity solutions to
\begin{equation}\label{eq:HJe}\tag{E}
	\partial_t w(x,t) +  H( x,\partial_x w(x,t), w(x,t) )=0, \quad (x,t)\in \mathbb{S} \times [0,+\infty),
\end{equation}
where $\mathbb{S}$ is the unit circle, under (H1), (H2), and  the following assumption:
\begin{itemize}
	\item[\textbf{(H4)}] there are constants $\kappa>0$, $\delta >0 $  such that
	$$
	-\kappa\leqslant\frac{\partial H}{\partial u}(x, p,u) \leqslant -\delta <0 ,\quad \forall (x,p,u)\in T^*\mathbb{S}\times \R.
	$$
\end{itemize}

Let $u_+$ denote the unique forward weak KAM solution of equation
\begin{equation}\label{eq:HJs}\tag{S}
	H(x,u'(x), u(x))=0, \quad x\in \mathbb{S},
\end{equation}
and let $\Lambda_{u_+}:=\{ (x,(u_+)'(x),u_+(x)) : x \in \mathcal{D}(u_+)\}$, where $\mathcal{D}(u_+) $ denotes the set of points of differentiability of $u_+$. See the Appendix for definitions and properties of forward and backward weak KAM solutions of \eqref{eq:HJs}. Under (H1)-(H3), 
backward weak KAM solutions and viscosity solutions are the same.

The first main result of this paper is stated as follows.
\begin{theorem}\label{ma}
Assume (H1), (H2), (H4).  If, in addition,  
\begin{itemize}
	\item[$\mathrm{(A)}$] \quad \quad \quad 	$\frac{\partial H}{\partial p}\Big |_{\Lambda_{u_+}}\neq 0$ 
\end{itemize}
holds, then there exist infinitely many nontrivial time-periodic viscosity solutions with different periods of equation \eqref{eq:HJe}. 
\end{theorem}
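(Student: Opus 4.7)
The plan is to exploit condition $\mathrm{(A)}$ together with the one-dimensionality of $\ci$ to uncover a rotational structure on the graph $\Lambda_{u_+}$, and then to harvest from it an infinite family of periodic orbits of $T_t$ with pairwise distinct minimal periods. The starting observation is that $\mathrm{(A)}$ forces the contact Hamiltonian vector field of \eqref{eq:HJs} to have nowhere-vanishing $x$-component on $\Lambda_{u_+}$; projected to $\ci$, this is a nowhere-zero vector field on a one-dimensional manifold, hence conjugate to a rotation. The characteristic flow of \eqref{eq:HJs} therefore descends to a flow $\{\phi_s\}_{s\in\R}$ on $\ci$ with some minimal period $T_\ast>0$, which will serve as the fundamental clock for all the periodic solutions I produce.

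I would then work inside the $T_t$-invariant set
\[
\mathcal{B}=\{\varphi\in C(\ci,\R):\varphi\ge u_+\text{ and }\varphi(x_0)=u_+(x_0)\text{ for some }x_0\in\ci\},
\]
which, by (D1), is precisely the set of initial data yielding bounded orbits, with $\{T_t\varphi\}_{t\ge T_\varphi}$ uniformly bounded. The standard Lipschitz/semiconcavity estimates for $T_t\varphi$ (for $t$ bounded below) then render this orbit tail relatively compact in $C(\ci,\R)$, so for every $\varphi\in\mathcal{B}$ the $\omega$-limit set $\omega(\varphi)\subset\mathcal{B}$ is nonempty, compact, and $T_t$-invariant.

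For each integer $n\ge 1$, I would next choose initial data $\varphi_n\in\mathcal{B}$ whose touching set $\{x:\varphi_n(x)=u_+(x)\}$ is a single $\phi_{T_\ast/n}$-orbit on $\ci$, with $\varphi_n>u_+$ strictly elsewhere. A key technical step is to show that, under $T_t$, this touching set is transported by the flow $\phi_t$: this is natural, since $u_+$ is a fixed point of $T_t$ and the minimizers appearing in the variational representation of $T_t$ along $\Lambda_{u_+}$ are exactly the calibrated curves of $u_+$. Using compactness of $\omega(\varphi_n)$, its invariance under $T_{T_\ast/n}$, and the preservation of the $n$-fold rotational symmetry of the touching set, I would then extract a point $\psi_n\in\omega(\varphi_n)$ satisfying $T_{T_\ast/n}\psi_n=\psi_n$.

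The main obstacle is to certify that $\psi_n$ is genuinely nontrivial, i.e.\ that it is neither equal to $u_+$ nor a common fixed point of $\{T_t\}_{t\ge 0}$. The strict inequality in $\mathrm{(H4)}$ is the essential tool here: because $\frac{\partial H}{\partial u}\le -\delta<0$, positive excess $T_t\varphi_n-u_+$ cannot dissipate instantaneously and, by a comparison/propagation argument, the ``bump'' structure of $\varphi_n$ away from its touching set is preserved along the orbit and survives to the limit $\psi_n$. Combined with the $n$-fold rotational symmetry of the touching set of $\psi_n$, this forces the minimal period of $\psi_n$ to lie in $\{T_\ast/(kn):k\ge 1\}$. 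Taking $n$ along a sequence of distinct primes then produces infinitely many periodic points of $T_t$ whose minimal periods are pairwise distinct, yielding the desired family of nontrivial time-periodic viscosity solutions of \eqref{eq:HJe}.
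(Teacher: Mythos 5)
Your high-level intuitions are reasonable (the flow on $\ci$ with minimal period $\mathcal T$, working in the bounded-orbit class $\mathcal B$, trying to win nontriviality from the strict monotonicity in $u$), but the proposal has three genuine gaps, each at a load-bearing point, and the overall architecture is quite different from the paper's.

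\begin{itemize}
\item \textbf{Touching-set transport.} Your argument hinges on the claim that the contact set $\{x : T_t\varphi_n(x)=u_+(x)\}$ is carried exactly by the circle flow $\phi_t$. This is not established and is in fact the delicate point: under $T_t$ the contact set can only \emph{grow}, not be rigidly transported (see the proof of Theorem~1.6 in the paper, where after waiting a finite time $t_1$ the effective set of $y$'s in $\inf_y h_{y,\varphi(y)}(x,t)$ is an $\epsilon$-neighborhood of $I_\varphi$, not $\phi_t(I_\varphi)$ itself). Once the contact set thickens, the claimed $n$-fold symmetry of $\varphi_n$ has no reason to persist, and the extraction of a genuine $T_{\mathcal T/n}$-fixed point in $\omega(\varphi_n)$ collapses.
\item \textbf{Nontriviality.} Your justification — that $\partial H/\partial u\le -\delta$ prevents the positive excess $T_t\varphi_n-u_+$ from dissipating, hence the ``bump'' survives to the limit — only shows that $\psi_n\ne u_+$; it does not show that $\psi_n$ is not a (different) common fixed point of $\{T_t\}$, nor that the limiting motion oscillates in time. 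The paper's essential work is exactly here: Lemma~2.3 constructs an explicit time-periodic subsolution $w(x,t)=u_0(x)+\epsilon+\epsilon\sin(\,\cdot\,)$ and, via comparison, forces $\liminf_{t\to\infty}h_{x_0,u_0(x_0)}(x_0,t)<\limsup_{t\to\infty}h_{x_0,u_0(x_0)}(x_0,t)$. Without some substitute for this lemma, there is no certificate that the limit object moves.
\item \textbf{Distinct minimal periods via primes.} Even granting $T_{\mathcal T/n}\psi_n=\psi_n$, the minimal period of $\psi_n$ is $\mathcal T/(k_n n)$ for some integer $k_n\ge 1$ that you do not control; taking $n$ over primes does not preclude collisions (e.g.\ $k_{p}q=k_{q}p$). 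The paper sidesteps the minimal-period issue: Lemma~2.5 takes $v_n(x,t):=\min_{0\le j<n}w(x,t+j\mathcal T/n)$ to manufacture a $\mathcal T/n$-periodic solution directly from a $\mathcal T$-periodic one, and multiplicity comes from translating in $t$ and again taking minima, which produces an infinite family without needing to pin down minimal periods.
\end{itemize}

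Two further points you elide but the paper must establish before the rotation picture even makes sense: the uniqueness and coincidence $u_+=u_-=u_0$ (Lemma~2.1), and the regularity $u_0\in C^\infty$ (Lemma~2.2), both of which are needed to make $\Lambda_{u_+}$ a genuine smooth periodic orbit of the characteristic flow. In summary, the paper's proof is built on an explicit oscillating subsolution plus the Markov property of the implicit action function $h_{x_0,u_0}$ — a fairly hands-on, one-shot construction — rather than on $\omega$-limit sets and symmetry preservation. Your scheme could conceivably be repaired, but it would need a proof of contact-set dynamics and a separate oscillation argument, which together are at least as hard as the paper's direct route.
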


\begin{remark} Some explanations on Theorem \ref{ma}:
	\begin{itemize}
		\item [(i)] We will show that equation \eqref{eq:HJs} has a unique backward KAM solution $u_-$ and  a unique forward KAM solution $u_+$. Moreover, $u_-=u_+=:u_0$, and $u_0$ is of class $C^\infty$. 
		 \item [(ii)] Note that $\Lambda_{u_0}$ (or equivalently, $\Lambda_{u_+}$) is a periodic orbit of the contact Hamiltonian flow $\Phi^H_t$ generated by
		 \begin{equation*}
		 	\left\{
		 	\begin{aligned}
		 		\dot x&=\frac{\partial H }{\partial p}(x,p,u),\\
		 		\dot p &=-\frac{\partial  H }{\partial x}(x,p,u)-\frac{\partial  H }{\partial u}(x,p,u) \cdot p,  \\ 
		 		\dot u&=\frac{\partial  H }{\partial p}(x,p,u) \cdot p- H(x,p,u).
		 	\end{aligned}
		 	\right.
		 \end{equation*}
	 Denote by $\mathcal{T}$ the period of the above periodic orbit throughout this paper. Note that the above contact Hamiltonian system is the characteristic equations of \eqref{eq:HJe}. This is why we call \eqref{eq:HJe} and \eqref{eq:HJs} contact Hamilton-Jacobi equations.  
	 \item [(iii)] We will also show that for each $n\in \mathbb{N}$, equation \eqref{eq:HJe} has infinite many nontrivial time  $\frac{\mathcal{T}}{n}$-periodic viscosity solutions.
	\end{itemize}
	
\end{remark}

As a consequence of Theorem \ref{ma}, we will discuss an interesting  bifurcation phenomenon for Hamilton-Jacobi equations
\begin{align}\label{9-1}\tag{E$_\lambda$}
	\partial_t w(x,t) +  H(x,\partial_x w(x,t), \lambda w(x,t))=0, \quad (x,t)\in \mathbb{S} \times [0,+\infty).
\end{align}
 The foundations of the one-parameter bifurcation theory has been laid by Poincar\'e who studied branching of solutions in the three-body problem. Applying Theorem \ref{ma} to \eqref{9-1}, we get the following result.
 
\begin{theorem}\label{ma1}
	Assume (H1), (H2), (H4).   
	\begin{itemize}
		\item [(1)] For $\lambda<0$, equation \eqref{9-1}  has a unique trivial time-periodic viscosity solution and has no nontrivial time-periodic viscosity solutions. In this case, the unique fixed point of $T_t$ is globally asymptotically stable in the Lyapunov sense.
		\item [(2)] For $\lambda=0$, equation \eqref{9-1} has infinitely many trivial time-periodic viscosity solutions and has no nontrivial time-periodic viscosity solutions. In this case, $T_t$ has no other $\omega$-limit points besides those fixed points.
		\item [(3)] If, In addition, assume
		\begin{itemize}
			\item[$\mathrm{(C)}$] \quad \quad 
		$
		\min_{p\in \R}H(x,p,0)<0,\quad \forall x\in\ci,
		$
		\end{itemize}
		then
	 there exists  $\lambda_0>0$ such that 
		for any $ \lambda\in (0,\lambda_0)$,  equation \eqref{9-1} has a unique trivial time-periodic smooth solution and infinitely many nontrivial time-periodic viscosity solutions. In this case, $T_t$ has a unique fixed point and infinitely many nontrivial  time-periodic points with different periods.
	\end{itemize}	
\end{theorem}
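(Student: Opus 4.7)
The plan is to view \eqref{9-1} as the contact Hamilton--Jacobi equation associated with the one-parameter family $H_\lambda(x,p,u):=H(x,p,\lambda u)$, and to split the argument according to the sign of $\lambda$. Since $\partial_u H_\lambda=\lambda\,\partial_u H(x,p,\lambda u)$, hypothesis (H4) makes $H_\lambda$ strictly increasing in $u$ for $\lambda<0$, independent of $u$ for $\lambda=0$, and itself of type (H4) for $\lambda>0$; in every case $H_\lambda(x,p,0)=H(x,p,0)$, so the standing normalization $c(H(\cdot,\cdot,0))=0$ is preserved along the family. Write $\{T_t^\lambda\}_{t\geqslant 0}$ for the semigroup associated with $H_\lambda$.

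Parts (1) and (2) will then reduce to what is recalled in Section~1.1. When $\lambda<0$, the strict-increasing case gives a unique, globally Lyapunov attracting fixed point of $T_t^\lambda$, ruling out any nontrivial periodicity. When $\lambda=0$, Fathi's classical weak KAM convergence theorem provides uniform convergence of every orbit to one of the infinitely many viscosity solutions of \eqref{eq:HJs} with $\lambda=0$, so every $\omega$-limit point is a fixed point and no nontrivial periodic point exists.

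For part (3) the strategy is to apply Theorem~\ref{ma} to $H_\lambda$ for all sufficiently small $\lambda>0$, the only remaining thing to check being assumption (A), i.e.
\[
\partial_p H\bigl(x,(u_+^\lambda)'(x),\lambda\,u_+^\lambda(x)\bigr)\neq 0\qquad\forall\,x\in\mathcal{D}(u_+^\lambda).
\]
By (C) together with (H1) and (H2), the level set $\{p:H(x,p,0)=0\}$ splits on $\ci$ into two smooth branches $p_-(x)<p_+(x)$, on each of which $\partial_p H(x,\cdot,0)$ has constant nonzero sign by strict convexity; hence (A) is automatic at $\lambda=0$. I would then perturb: establish that $u_+^\lambda\to u_+^0$ uniformly as $\lambda\to 0^+$ with a Lipschitz bound on $(u_+^\lambda)'$ uniform in $\lambda$, so that $\Lambda_{u_+^\lambda}\to\Lambda_{u_+^0}$ in a strong enough sense, and conclude by continuity of $\partial_p H$ and compactness of $\ci$ that (A) persists for all $\lambda\in(0,\lambda_0)$ for some threshold $\lambda_0>0$. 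Uniqueness and smoothness of the trivial periodic solution then follow from remark~(i) after Theorem~\ref{ma} together with the comparison principle in the strictly decreasing regime.

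The hard part will be this perturbation step: transferring the transversality $\partial_p H\neq 0$ from the level set $\{H(\cdot,\cdot,0)=0\}$ to the graphs $\Lambda_{u_+^\lambda}$ requires enough regularity of $\lambda\mapsto u_+^\lambda$ to control its derivative uniformly on the (a priori only dense) set of differentiability. The cleanest route is through normal hyperbolicity of the closed characteristic $\{(x,p_+(x),0)\}\subset T^*\ci\times\R$ for the contact flow of $H_0$: small contact perturbations should preserve it as a smooth invariant graph in $T^*\ci\times\R$ which, by uniqueness of the fixed point of $T_t^\lambda$ in the strictly decreasing regime, must coincide with $\Lambda_{u_+^\lambda}$, so transversality survives by continuity and compactness.
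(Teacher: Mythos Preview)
Your reduction is exactly the one in the paper: treat (1) and (2) as the known strictly-increasing and classical cases, and for (3) apply Theorem~\ref{ma} to $H_\lambda(x,p,u)=H(x,p,\lambda u)$ once (A) is verified for small $\lambda>0$. The divergence is in how you propose to verify (A).

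Your perturbation/normal-hyperbolicity route has two real obstacles. First, at $\lambda=0$ the forward weak KAM solution is not unique, so ``$u_+^\lambda\to u_+^0$'' has no canonical target; the Aubry set of $H(x,p,0)$ need not be all of $\ci$, and the graph $\Lambda_{u_+^0}$ is not a single smooth branch $p_+(\cdot)$ in general. Second, the curve $\{(x,p_+(x),0)\}$ is typically \emph{not} a closed orbit of the contact flow of $H_0$: on $\{H_0=0\}$ one has $\dot u=p\,\dot x$, so closure requires $\int_0^1 p_+(x)\,dx=0$, which is not assumed; and even when it is closed, $\partial_u H_0\equiv 0$ makes the $u$-direction neutral, so normal hyperbolicity fails precisely at the parameter value you want to perturb from. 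So the ``cleanest route'' you sketch does not go through as stated.

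The paper bypasses all of this with a one-line contradiction that needs only a uniform bound $\|u_+^\lambda\|_\infty\leqslant M_1$ for $\lambda\in[0,1]$ (this is cited from \cite{WYZ0}), not convergence or derivative control. If (A) failed along $\lambda_n\to 0^+$ at points $(x_n,p_n,u_n)\in\Lambda_{u_+^{\lambda_n}}$, then $\partial_p H(x_n,p_n,\lambda_n u_n)=0$ together with $H(x_n,p_n,\lambda_n u_n)=0$ forces, by strict convexity, $\min_{p}H(x_n,p,\lambda_n u_n)=0$. Since $|\lambda_n u_n|\leqslant \lambda_n M_1\to 0$, passing to a subsequence $x_{n_k}\to x^*$ gives $\min_{p}H(x^*,p,0)=0$, contradicting (C). This is the whole argument; no information about $\Lambda_{u_+^\lambda}$ beyond its containment in the bounded zero-energy shell is used. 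You already isolated the relevant convexity fact (``on each branch $\partial_p H$ has constant nonzero sign''); turning it into this compactness-plus-contradiction argument is both shorter and avoids the regularity issues you flagged.
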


\begin{remark} Note that
	\begin{itemize}
		\item Items (1) and (2) are existing results. We will only prove the third one.
		\item Item (3) tells us that a new phenomenon has appeared: there are not just fixed points in the $\omega$-limit set of $T_t$. Nontrivial periodic points do exist. 
	\end{itemize}
\end{remark}

The last result of the present paper is a long-time behavior result for equation \eqref{eq:HJe}, which indicates the complexity of asymptotic behavior of the $(C(\ci ,\R),\{T_t\}_{t\geqslant 0})$.

\begin{theorem}\label{ma2} Assume (H1), (H2), (H4) and (A).
	Let $\varphi\in C(\mathbb{S},\R)$ satisfy 
	\begin{align}\label{7-30}
	\min_{x\in\mathbb{S}}(\varphi(x)-u_+(x))=0<\max_{x\in\mathbb{S}}(\varphi(x)-u_+(x)). 
	\end{align}
	Then the limit
	$$
	\lim_{n\to +\infty}T_{n\mathcal{T}+t}\varphi(x)=:w_{\varphi}(x,t), \quad \forall (x,t) \in \mathbb{S} \times [0,+\infty)
	$$
	exists and $w_{\varphi}(x,t)$ is a nontrivial  $\mathcal{T}$-periodic viscosity solution of \eqref{eq:HJe}.
\end{theorem}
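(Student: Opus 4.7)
My plan is: first establish pre-compactness of the forward orbit $\{T_t\varphi\}_{t \geq T_0}$; second, prove convergence of the discrete sequence $\psi_n := T_{n\mathcal{T}}\varphi$ by exploiting the closed-loop geometry of $u_+$-calibrated curves on $\ci$; third, promote this to convergence of $T_{n\mathcal{T}+t}\varphi$ for every $t\ge 0$ via semigroup continuity, read off the $\mathcal{T}$-periodicity, and check nontriviality. Pre-compactness is routine: the hypothesis $\varphi \geq u_+$ with $\min(\varphi-u_+)=0$ puts $\varphi$ in the regime of (D1), so $|T_t\varphi| \leq K$ for $t \geq T_\varphi$; monotonicity of $T_t$ and $T_t u_+ = u_+$ give $T_t\varphi \geq u_+$; standard Lipschitz regularization for solutions of \eqref{eq:HJe} produces an equi-Lipschitz bound, so Arzelà-Ascoli makes $\{\psi_n\}$ pre-compact in $C(\ci, \R)$.

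The technical heart is proving that $\psi_n$ has a unique accumulation point. Under (A), $u_+ = u_-$ is smooth and the projection to $\ci$ of the contact-Hamiltonian periodic orbit $\Lambda_{u_+}$ closes up in period exactly $\mathcal{T}$ (Remark (i)-(ii) after Theorem \ref{ma}); thus for every $x \in \ci$ there is a $u_+$-calibrated loop $\gamma_x : [0,\mathcal{T}] \to \ci$ with $\gamma_x(0)=\gamma_x(\mathcal{T})=x$ and $\int_0^\mathcal{T} L(\gamma_x, \dot\gamma_x, u_+(\gamma_x))\,d\tau = 0$. Inserting $\gamma_x$ into the Lax-Oleinik variational formula for $T_\mathcal{T}\psi_n(x)$, subtracting the calibration identity for $u_+$, and using $L_u = -H_u \geq \delta > 0$ yields the closed-loop upper bound
\begin{equation*}
\psi_{n+1}(x) - u_+(x) \;\leq\; \psi_n(x) - u_+(x) + \int_0^\mathcal{T} L_u(\cdots)\bigl(T_\tau\psi_n(\gamma_x(\tau)) - u_+(\gamma_x(\tau))\bigr)\,d\tau;
\end{equation*}
a matching lower bound is obtained by using the minimizer of $T_\mathcal{T}\psi_n(x)$ together with the sub-solution property of $u_+$. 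These two estimates, combined with pre-compactness and the invariance of the $\omega$-limit set under $T_\mathcal{T}$, are what I would use to force uniqueness of the subsequential limit and hence convergence $\psi_n \to w_\varphi(\cdot, 0)$ in $C(\ci, \R)$.

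Once convergence of $\psi_n$ is in hand, the remaining step is routine. From $T_{n\mathcal{T}+t}\varphi = T_t(T_{n\mathcal{T}}\varphi)$ and the continuity of each $T_t$ on $C(\ci,\R)$,
\begin{equation*}
w_\varphi(\cdot, t) \;:=\; \lim_{n \to +\infty} T_{n\mathcal{T}+t}\varphi \;=\; T_t w_\varphi(\cdot, 0),
\end{equation*}
so $(x,t) \mapsto w_\varphi(x,t)$ is the viscosity solution of \eqref{eq:HJe} with initial datum $w_\varphi(\cdot, 0)$, and $w_\varphi(\cdot, t+\mathcal{T}) = w_\varphi(\cdot, t)$ is immediate from relabeling. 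For nontriviality $w_\varphi \not\equiv u_+$, I would argue by contradiction: if $\psi_n \to u_+$ uniformly, the closed-loop estimate of Step 2 iterated along $\varphi, T_\mathcal{T}\varphi, \ldots$ together with the sign $L_u \geq \delta$ is incompatible with the hypothesis $\max_x(\varphi - u_+) > 0$.

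The main obstacle lies in Step 2 — using the closed-loop estimate to pin down a unique limit rather than merely bound the family. A naive Gronwall comparison of $T_t\psi_n$ with $u_+$ driven by $L_u \geq \delta$ yields exponential growth rather than contraction over any fixed time interval; the crucial cancellation is that the calibrated curve closes up after exactly one period $\mathcal{T}$, which is precisely where assumption (A) enters sharply by guaranteeing that $\Lambda_{u_+}$ is a genuine non-degenerate periodic orbit of $\Phi^H_t$. Without (A) the closed-loop identity, and with it the control over accumulation points of $\psi_n$, would collapse.
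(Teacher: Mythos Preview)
Your Step~2 has a genuine gap. The closed-loop upper bound you write,
\[
\psi_{n+1}(x)-u_+(x)\;\le\;\psi_n(x)-u_+(x)+\int_0^{\mathcal T}L_u(\cdots)\bigl(T_\tau\psi_n(\gamma_x(\tau))-u_+(\gamma_x(\tau))\bigr)\,d\tau,
\]
carries a \emph{non-negative} integral on the right (since $L_u=-H_u\ge\delta>0$ and $T_\tau\psi_n\ge u_+$), so it does not force $\psi_{n+1}-u_+\le\psi_n-u_+$ and gives neither contraction nor monotonicity. You acknowledge this (``naive Gronwall\ldots yields exponential growth'') and assert that the closed-loop nature is the cure, but you never explain how the two inequalities together with pre-compactness and $T_{\mathcal T}$-invariance of the $\omega$-limit set actually single out a \emph{unique} accumulation point. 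Nothing in what you wrote rules out $\{\psi_n\}$ oscillating among several fixed points of $T_{\mathcal T}$; the mechanism is simply missing.

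The paper bypasses this entirely and never compares $\psi_{n+1}$ with $\psi_n$. It works instead with the implicit action functions $h_{y,u_+(y)}(x,n\mathcal T+t)$: because the calibrated orbit is $\mathcal T$-periodic one has $h_{y,u_+(y)}(y,\mathcal T)=u_+(y)$, and the Markov property then makes $n\mapsto h_{y,u_+(y)}(x,n\mathcal T+t)$ \emph{monotone decreasing} for each fixed $(x,t,y)$. Convergence of $T_{n\mathcal T+t}\varphi$ is obtained by a squeeze: using (D3) one shows that for large $t$ only starting points in an $\epsilon$-neighbourhood $O_\epsilon$ of the contact set $I_\varphi=\{\varphi=u_+\}$ contribute to $T_t\varphi(x)=\inf_y h_{y,\varphi(y)}(x,t)$, so
\[
\inf_{y\in \overline O_\epsilon}h_{y,u_+(y)}(x,n\mathcal T+t)\;\le\;T_{n\mathcal T+t}\varphi(x)\;\le\;\inf_{y\in I_\varphi}h_{y,u_+(y)}(x,n\mathcal T+t).
\]
Both bounds converge by monotonicity, and their limits coincide as $\epsilon\to0$ because two starting points on the periodic orbit a distance $O(\epsilon)$ apart produce action-function limits differing only by a time shift $\tau_2-\tau_1=O(\epsilon)$. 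Nontriviality is likewise not handled by your contradiction sketch; it rests on Lemma~\ref{lem251}, where an explicit time-periodic subsolution forces $\liminf_t h_{x_0,u_0(x_0)}(x_0,t)<\limsup_t h_{x_0,u_0(x_0)}(x_0,t)$, which via Lemma~\ref{thm1} makes each $\lim_n h_{y,u_+(y)}(\cdot,n\mathcal T+t)$ a nontrivial periodic solution.
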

\begin{remark}
	Note that condition \eqref{7-30} implies that $\varphi$ satisfies the conditions in (D1). 
\end{remark}

\subsection{Historical remarks}
The study of the viscosity solution theory for Hamilton-Jacobi equations has a long history dating back to the pioneering work of Crandall and Lions \cite{CL}. Since then, much progress has been made in this field. There are 
innumerable important works by many authors, see for instance \cite{Bar,I,T} and the references therein. Our methods and tools used in this paper come from \cite{WWY,WWY1,WWY2,WWY3}. 
In \cite{WWY} the authors established a variational principle for contact Hamiltonian systems under assumptions (H1)-(H3). This is the starting point of this series of works. Based on the variational principle, the authors of \cite{WWY1} dealt with the existence and representation formula of viscosity solutions to contact Hamilton-Jacobi equations under the same assumptions. The authors of \cite{WWY2} generalized part of Aubry-Mather-Ma\~n\'e-Fathi theory for classical Hamiltonian systems to contact Hamiltonian systems under (H1), (H2) and $0<\frac{\partial H}{\partial u}\leqslant \kappa$, and then they  \cite{WWY3} moved to strict decreasing case.

For Hamilton-Jacobi equations with time-periodic Hamiltonians $\bar{H}(x,p,t)$, there are many interesting works on the time-periodic viscosity solutions issue (see for instance, \cite{FM,BS,BR,WY,WY1}). To the best of our knowledge, there are few papers dealing with this subject for time-independent (contact) Hamilton-Jacobi equations.


We will recall some definitions and preliminary results obtained in the aforementioned works on contact Hamiltonian systems in the Appendix. 
Especially, we will recall the concepts of the solution semigroups $T^-_t$ and $T^+_t$, which were introduced in \cite{WWY1}. We will use $T_t$ to denote $T^-_t$ throughout this paper except for the Appendix.

\section{Periodic solutions and bifurcation phenomenons }

\subsection{Proof of Theorem \ref{ma}}

We use $\mathcal{L}: T^*\ci\rightarrow T\ci$ to denote  the Legendre transform. Let
$\bar{\mathcal{L}}:=(\mathcal{L}, Id)$, where $Id$ denotes the identity map from $\R$ to $\R$. Then
\[
\bar{\mathcal{L}}:T^*\ci\times\R\to T\ci\times\R,\quad (x,p,u)\mapsto \left(x,\frac{\partial H}{\partial p}(x,p,u),u\right)
\]
is a diffeomorphism. Using $\bar{\mathcal{L}}$, we can define
the contact Lagrangian $L(x,\dot{x},u)$ associated to $H(x,p,u)$ as
\[
L(x,\dot{x},u):=\sup_{p\in T^*_x\ci}\{\langle \dot{x},p\rangle_x-H(x,p,u)\}.
\]
Then $L(x,\dot{x}, u)$ and $H(x,p,u)$ are Legendre transforms of each other, depending on conjugate variables $\dot{x}$ and $p$ respectively. Let $\Phi^H_t$ and $\Phi^L_t$ denote the local contact Hamiltonian flow and Lagrangian flow, respectively.

We divide the proof of Theorem \ref{ma} into several steps: from Lemma \ref{lem00} to Lemma \ref{leml}. In this section we always assume (H1), (H2), (H4) and (A). 

\begin{lemma}\label{lem00}
	 Equation \eqref{eq:HJs}
 has a unique backward weak KAM solution $u_-$ and a unique forward weak KAM solution $u_+$. Furthermore, $u_-=u_+=:u_0$ and $u_0$ is in fact a classical solution.
\end{lemma}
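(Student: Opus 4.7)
The plan is to extract a single closed characteristic from the set $\Lambda_{u_+}$ and show it must describe both weak KAM solutions as a classical solution. For existence I would appeal to the weak KAM theory developed in \cite{WWY2,WWY3}: under (H1), (H2), (H4) equation \eqref{eq:HJs} admits at least one backward weak KAM solution (i.e., a viscosity solution) and at least one forward weak KAM solution $u_+$, and the standard comparison argument for strictly decreasing Hamiltonians gives uniqueness of the viscosity solution $u_-$. The forward uniqueness in the strictly decreasing case is also available from \cite{WWY3}.

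Next I would analyze the geometry of $\Lambda_{u_+}\subset T^*\mathbb{S}\times\mathbb{R}$. By the general theory $\Lambda_{u_+}$ is invariant under $\Phi^H_t$. Condition (A) forces $\dot x=\frac{\partial H}{\partial p}\neq 0$ along every orbit in $\Lambda_{u_+}$, so $\dot x$ has constant sign on each orbit. Since $\mathbb{S}$ is the one-dimensional circle and $\Lambda_{u_+}$ is compact and invariant, a Poincaré-type argument shows that each orbit must close up into a periodic orbit whose $x$-projection covers all of $\mathbb{S}$; call its period $\mathcal T$. Thus the map $t\mapsto x(t)$ is a $C^\infty$ diffeomorphism from $\mathbb{R}/\mathcal T\mathbb{Z}$ onto $\mathbb{S}$, and inverting it we can write $p$ and $u$ along the orbit as smooth functions $p(x)$ and $u(x)$ of $x\in\mathbb{S}$.

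From the third contact Hamilton equation and the conservation law $H\equiv 0$ on $\Lambda_{u_+}$, together with $p(x)=u'(x)$ (which follows from the identification of $(x,(u_+)'(x),u_+(x))$ with the orbit at every $x\in\mathcal{D}(u_+)$), this smooth $u(x)$ satisfies $H(x,u'(x),u(x))=0$ pointwise. Since $\mathcal{D}(u_+)$ is dense in $\mathbb{S}$ and $u(x)=u_+(x)$ there, continuity yields $u_+\equiv u$ on $\mathbb{S}$. Hence $u_+$ is a classical $C^\infty$ solution of \eqref{eq:HJs}; in particular it is a viscosity solution, so by uniqueness of the latter we get $u_+=u_-=:u_0$, and a bootstrap from the equation confirms $u_0\in C^\infty(\mathbb{S})$.

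The step I expect to be the main obstacle is justifying rigorously that the invariant set $\Lambda_{u_+}$ actually consists of a single closed orbit (rather than, say, a union of several closed orbits with different $u$-levels, or of a more complicated flow-invariant subset), and that $p(x)=u_+'(x)$ globally rather than just on $\mathcal{D}(u_+)$. This is where the one-dimensionality of $\mathbb{S}$, the transversality condition (A), and the contraction properties of $\Phi^H_t$ in the strictly decreasing case from \cite{WWY3} must be combined carefully; the remaining pieces are comparatively routine consequences.
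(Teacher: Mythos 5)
Your overall geometric picture is right, but there is a concrete error in the way you justify uniqueness of the backward solution, and it is precisely the point where the paper's argument diverges from yours. You write that ``the standard comparison argument for strictly decreasing Hamiltonians gives uniqueness of the viscosity solution $u_-$.'' For the stationary equation $H(x,Du,u)=0$ with $\frac{\partial H}{\partial u}\leqslant-\delta<0$, the comparison principle \emph{fails}: properness requires $H$ nondecreasing in $u$, and indeed viscosity solutions of such equations are typically far from unique. The decreasing case being treated in \cite{WWY3} is interesting exactly because the backward semigroup $T^-_t$ is expansive, not contractive. The paper obtains uniqueness of the backward weak KAM solution quite differently: it first sets $u_-:=\lim_{t\to+\infty}T_tu_+$, notes that the projected Aubry set $\mathcal A=\{u_-=u_+\}$ is nonempty, uses (A) to rule out fixed points of $\Phi^H_t$ on $\Lambda_{u_+}$ so that the $(u_-,L,0)$-calibrated curve through a point of $\mathcal A$ must wind around $\ci$, concludes $\mathcal A=\ci$ (hence $u_-=u_+$), and only then invokes the graph property of the Aubry set to get uniqueness. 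Without some substitute for this Aubry-set step, your final ``so by uniqueness of the latter we get $u_+=u_-$'' does not follow.

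On the geometric side, you correctly flag as the main obstacle showing that $\Lambda_{u_+}$ is a single closed orbit, but the obstacle is real and unresolved in your sketch: $\Lambda_{u_+}$ is indexed only by $\mathcal D(u_+)$, so it need not be compact nor genuinely $\Phi^H_t$-invariant (only a suitable closure is), and a bare Poincar\'e argument on it is not immediate. The paper avoids this entirely by running the argument through the Aubry set $\tilde{\mathcal A}$, which \emph{is} a nonempty compact invariant set, and through a single calibrated curve of $u_-$. Once $\mathcal A=\ci$ is established, the rest of your plan (reparametrize the periodic orbit by $x$, read off $p(x)=u_0'(x)$ and $H(x,u_0'(x),u_0(x))=0$, bootstrap smoothness from the contact ODE) matches the paper's Lemma~\ref{lem0000} and is fine.
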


\begin{proof}
Equation \eqref{eq:HJs} has a unique forward weak KAM solution \cite{WWY2,WWY3}, denoted by $u_+$. Moreover, $u_-:=\lim_{t\to +\infty}T_tu_+$ is a backward weak KAM solution \cite{WY2021}.  Define the projected Aubry set by
	\[
	\mathcal{A}:=\{x\in\ci:\ u_-(x)=u_+(x)\}.
	\]
	In view of \cite{WWY2}, $\mathcal{A}$ is nonempty. Take an arbitrary point $x\in \mathcal{A}$. Denote by $x(t)$  the global $(u_-,L,0)$-calibrated curve passing through $x$. 
From (A),	the contact Hamiltonian flow $\Phi^H_t$ has no fixed points on $\Lambda_{u_+} $. So, we get that
	\[
	\mathcal{A}=\{x(t):t\in\R\}=\ci,\quad \tilde{\mathcal{A}}=\{(x(t),\dot{x}(t),u_-(x(t))):\ t\in\R\},
	\]
 where $\tilde{\mathcal{A}}$ denotes the Aubry set in $T\ci\times\R$ \cite{WWY2}. In this case, $u_-=u_+=:u_0$. Furthermore, from the graph property of the Aubry set, $u_0$ is the unique backward weak KAM solution. Recall that any backward weak KAM solution is $C^{1,1}$ on $\mathcal{A}$. The proof is complete.
\end{proof}

It is clear that $\tilde{\mathcal{A}}=\{(x(t),\dot{x}(t),u_0(x(t))): t\in\R\}$ is a periodic orbit of the contact Lagrangian flow $\Phi_t^L$.
Denote by $\mathcal{T}$ the period of this periodic orbit.
Let  
\begin{align}\label{9-199}
(x(t),p(t),u_0(x(t))):={\bar{\mathcal{L}}}^{-1}((x(t),\dot{x}(t),u_0(x(t))).
\end{align}

\begin{lemma}\label{lem0000}
Let $u_0$ be as in Lemma \ref{lem00}. Then $u_0$ is of class $C^\infty$. 
\end{lemma}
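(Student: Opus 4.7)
The plan is to exploit the fact that $\Lambda_{u_0}$ (equivalently, $\tilde{\mathcal{A}}$ after Legendre transform) is a smooth periodic orbit of the smooth contact Hamiltonian flow $\Phi^H_t$, together with assumption (A), to re-parametrize $u_0$ by the time variable along this orbit.

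First, I would note that the curve $t\mapsto (x(t),p(t),u_0(x(t)))$ defined in \eqref{9-199}, being the orbit of a $C^\infty$ flow, is $C^\infty$ as a function of $t\in\mathbb{R}/\mathcal{T}\mathbb{Z}$. In particular, the component $t\mapsto x(t)$ is smooth with
\[
\dot{x}(t)=\frac{\partial H}{\partial p}\bigl(x(t),p(t),u_0(x(t))\bigr),
\]
which by assumption (A) never vanishes along the orbit. Hence $\dot{x}(t)$ has constant sign on $[0,\mathcal{T}]$ by continuity, and $t\mapsto x(t)$ is strictly monotone on one period.

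Second, because $\Lambda_{u_0}$ is, by definition, the graph of $x\mapsto (u_0'(x),u_0(x))$ over the whole circle $\mathbb{S}$, each $x\in\mathbb{S}$ has exactly one preimage in $\Lambda_{u_0}$. Together with the monotonicity from the previous step, this forces $t\mapsto x(t)$ to be a bijection from $\mathbb{R}/\mathcal{T}\mathbb{Z}$ onto $\mathbb{S}$. Since its derivative is nowhere zero, the inverse function theorem upgrades it to a $C^\infty$ diffeomorphism; denote its inverse by $x\mapsto t(x)\in\mathbb{R}/\mathcal{T}\mathbb{Z}$, which is therefore also $C^\infty$.

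Finally, I would conclude by writing $u_0(x)$ as the composition of the $C^\infty$ map $x\mapsto t(x)$ with the $C^\infty$ third component of the orbit, $t\mapsto u_0(x(t))$. The resulting composition is smooth, so $u_0\in C^\infty(\mathbb{S},\mathbb{R})$. The only real subtlety is justifying that the time-parametrization covers $\mathbb{S}$ exactly once; this is where the graph property of $\Lambda_{u_0}$ (ultimately coming from Lemma \ref{lem00}) combines with (A). I do not anticipate any serious obstacle beyond bookkeeping.
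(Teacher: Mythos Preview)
Your proposal is correct and follows essentially the same route as the paper: both use assumption (A) to get $\dot x(t)\neq 0$ along the orbit, invert $t\mapsto x(t)$ to obtain a smooth inverse $x\mapsto t(x)$, and then read off $u_0(x)$ as a smooth function. The paper packages the last step slightly differently---it rewrites the contact Hamiltonian system with $x$ as the independent variable and invokes smoothness of solutions to smooth ODEs---but this is just a cosmetic variant of your composition-of-smooth-maps argument.
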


\begin{proof}
	Let $(x(t),p(t),u_0(x(t)))$ be as in \eqref{9-199}. By (A), it is direct to see that $\dot{x}(t)\neq 0$ for all $t\in\R$. 
	
	From
	\begin{equation}\label{9-200}
		\left\{
		\begin{aligned}
			\dot x&=\frac{\partial H }{\partial p}(x,p,u),\\
			\dot p &=-\frac{\partial  H }{\partial x}(x,p,u)-\frac{\partial  H }{\partial u}(x,p,u) \cdot p,  \\ 
			\dot u&=\frac{\partial  H }{\partial p}(x,p,u) \cdot p- H(x,p,u),
		\end{aligned}
		\right.
	\end{equation}
 we can get that
	\begin{equation}\label{9-201}
	\left\{
	\begin{aligned}
		\frac{dt}{dx}&=\big(\frac{\partial H }{\partial p}(x,p,u)\big)^{-1},\\
		\frac{dp}{dx} &=\big(\frac{\partial H }{\partial p}(x,p,u)\big)^{-1}\big(-\frac{\partial  H }{\partial x}(x,p,u)-\frac{\partial  H }{\partial u}(x,p,u) \cdot p\big),  \\ 
		\frac{du}{dx}&=\big(\frac{\partial H }{\partial p}(x,p,u)\big)^{-1}\big(\frac{\partial  H }{\partial p}(x,p,u) \cdot p- H(x,p,u)\big).
	\end{aligned}
	\right.
\end{equation}
 Since  $(x(t),p(t),u_0(x(t)))$ is a solution of \eqref{9-200}, then $t=t(x)$, $p(x):=p(t(x))$, $u_0(x)=u_0(t(x))$ is a solution of \eqref{9-201}, where $t=t(x)$ is uniquely determined by $x=x(t)$. Note that the vector field of \eqref{9-201} is of class $C^\infty$. So, $u_0(x)$ is of class $C^\infty$.
\end{proof}

\begin{lemma}\label{lem251} Let $u_0$ be as in Lemma \ref{lem00}. Then for any $x_0\in \ci$,  
	$$
	\liminf_{t\to +\infty} h_{x_0,u_0(x_0)}(x_0,t)< \limsup_{t\to +\infty}  h_{x_0,u_0(x_0)}(x_0,t).
	$$
\end{lemma}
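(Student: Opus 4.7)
Set $\phi(t):=h_{x_0,u_0(x_0)}(x_0,t)$. The plan is to prove two complementary bounds that together give the asserted strict inequality: an equality $\phi(n\mathcal{T})=u_0(x_0)$ on the lattice $\mathcal{T}\mathbb{N}$, and a strict lower bound $\lim_{n\to+\infty}\phi(n\mathcal{T}+s)>u_0(x_0)$ at any fixed $s\in(0,\mathcal{T})$.

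\emph{Bounds on the lattice.} For the upper bound, I test the definition of $h_{x_0,u_0(x_0)}$ against the $\mathcal{T}$-periodic calibrated curve $x(\cdot)$ through $x_0$ furnished by Lemma~\ref{lem00}: along $x(\cdot)$, both $\tau\mapsto h_{x_0,u_0(x_0)}(x(\tau),\tau)$ and $\tau\mapsto u_0(x(\tau))$ solve the same ODE $\dot u=L(x(\tau),\dot x(\tau),u)$ with the common initial value $u_0(x_0)$, so by uniqueness they coincide, yielding $\phi(n\mathcal{T})\le u_0(x_0)$. For the matching lower bound $\phi(t)\ge u_0(x_0)$ at every $t>0$, I use that $u_0$ is a fixed point of $T_t$ together with the standard representation $T_t\varphi(y)=\inf_{z\in\ci}h_{z,\varphi(z)}(y,t)$, specialized to $\varphi=u_0$, $z=y=x_0$. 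Combining, $\phi(n\mathcal{T})=u_0(x_0)$ and hence $\liminf_{t\to+\infty}\phi(t)\le u_0(x_0)$.

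\emph{Strict gap at non-lattice times.} Fix $s\in(0,\mathcal{T})$ and argue by contradiction: suppose $\phi(t_k)\to u_0(x_0)$ along a sequence $t_k=n_k\mathcal{T}+s\to+\infty$, and let $\gamma_k\colon[0,t_k]\to\ci$ be near-minimizing loops based at $x_0$. Write $h_k(\tau):=h_{x_0,u_0(x_0)}(\gamma_k(\tau),\tau)$, which satisfies $h_k\ge u_0\circ\gamma_k$ by the previous bound. Applying the Fenchel--Young inequality $L(x,\dot x,v)\ge u_0'(x)\dot x-H(x,u_0'(x),v)$, combined with the loop identity $\int_0^{t_k}u_0'(\gamma_k)\dot\gamma_k\,d\tau=0$ and the identity $H(\cdot,u_0'(\cdot),u_0(\cdot))\equiv 0$, one gets
\[
\phi(t_k)-u_0(x_0)+o(1)=\int_0^{t_k}F_k(\tau)\,d\tau+\int_0^{t_k}\bigl[-H(\gamma_k,u_0'(\gamma_k),h_k)\bigr]d\tau,
\]
where $F_k\ge 0$ is the Fenchel--Young defect. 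Since $h_k\ge u_0(\gamma_k)$ and $H_u\le-\delta<0$, the second integrand dominates $\delta\bigl(h_k-u_0(\gamma_k)\bigr)\ge 0$. Both non-negative integrals therefore vanish: $F_k\to 0$ forces $\dot\gamma_k$ onto the characteristic field $H_p(\gamma_k,u_0'(\gamma_k),u_0(\gamma_k))$, which by (A) is the nonvanishing vector field generating the periodic orbit $x(\cdot)$. So $\gamma_k$ must trace $x(\cdot)$ starting from $x_0$, forcing $\gamma_k(t_k)\to x(s)\ne x_0$---contradicting the loop condition $\gamma_k(t_k)=x_0$. Hence $\lim_n\phi(n\mathcal{T}+s)>u_0(x_0)$, and $\limsup_{t\to+\infty}\phi(t)>u_0(x_0)=\liminf_{t\to+\infty}\phi(t)$.

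\emph{Main obstacle.} The technical heart lies in the final step: converting $L^1$-smallness of the Fenchel--Young defect $F_k$ and the dissipation deficit $h_k-u_0\circ\gamma_k$ into genuine closeness of $\gamma_k(t_k)$ to $x(s)$. Because $t_k\to+\infty$, a naive Gronwall estimate would blow up; overcoming this requires Tonelli-type compactness of action-minimizing curves, the strict convexity of $L$ in the velocity (to translate vanishing of $F_k$ into pointwise velocity convergence), and careful use of (A) together with the compactness of $\ci$ to propagate the ODE approximation up to the terminal time.
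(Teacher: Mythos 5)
Your strategy is genuinely different from the paper's. The paper builds an explicit $C^\infty$ subsolution $w(x,t)=u_0(x)+\epsilon+\epsilon\sin\bigl(-\tfrac{\pi}{2}+f(x)-f(x_0)+\tfrac{2\pi}{Z}t\bigr)$ whose phase is constant along characteristics of $B=H_p(\cdot,u_0',u_0)$, verifies the subsolution property by a short Taylor computation using (H4), and invokes the comparison principle to get $h_{x_0,u_0(x_0)}(x_0,t)\geqslant w(x_0,t)$, which oscillates between $u_0(x_0)$ and $u_0(x_0)+2\epsilon$. You instead argue by contradiction from minimizers. Your lattice identity $\phi(n\mathcal T)=u_0(x_0)$ is correct (and supplies the $\liminf$ half of the conclusion), and the decomposition via Fenchel--Young, the loop identity, and (H4) is set up correctly.

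The step you yourself flag as the ``main obstacle'' is, however, a genuine gap, and the toolkit you propose to fill it is not the right one. Strict convexity of $L$ in $\dot x$ (with a uniform lower Hessian bound on the a priori bounded velocity range of Tonelli minimizers) converts $\int_0^{t_k}F_k\,d\tau\to 0$ only into $L^2$-smallness $\int_0^{t_k}|\dot\gamma_k-B(\gamma_k)|^2\,d\tau\to 0$; it does not give ``pointwise velocity convergence,'' nor the $L^1$-smallness $\int_0^{t_k}|\dot\gamma_k-B(\gamma_k)|\,d\tau\to 0$ that you actually need to locate $\gamma_k(t_k)$. Over an interval of growing length $t_k$ this is a real loss: a defect of size $O(1/n_k)$ over a time $\sim n_k\mathcal T$ has vanishing $L^2$ norm but $O(1)$ $L^1$ norm (compare the reparametrized loop $x(\alpha_k\tau)$, $\alpha_k=n_k\mathcal T/t_k$). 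What rescues the argument is (H4) used a second time, in weighted form: setting $\Delta_k:=h_k-u_0\circ\gamma_k\geqslant 0$ and letting $F_k^0\geqslant 0$ be the Fenchel--Young defect evaluated at $u=u_0\circ\gamma_k$, one has $\dot\Delta_k\geqslant F_k^0+\delta\Delta_k$ with $\Delta_k(0)=0$, hence $\Delta_k(t_k)\geqslant\int_0^{t_k}F_k^0(\sigma)e^{\delta(t_k-\sigma)}\,d\sigma$. Since $\Delta_k(t_k)=\phi(t_k)-u_0(x_0)\to 0$, the exponentially weighted integral of $F_k^0$ vanishes, and Cauchy--Schwarz against the integrable weight $e^{-\delta(t_k-\sigma)}$ then gives $\int_0^{t_k}|\dot\gamma_k-B(\gamma_k)|\,d\tau\to 0$. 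Only with this $L^1$ control does the winding-number identity $\int_0^{t_k}\dot\gamma_k(\tau)/B(\gamma_k(\tau))\,d\tau\in\mathcal T\mathbb Z$ (for a loop based at $x_0$) versus $t_k=n_k\mathcal T+s$, $s\in(0,\mathcal T)$, produce a contradiction; this is what your informal ``$\gamma_k$ must trace $x(\cdot)$, forcing $\gamma_k(t_k)\to x(s)$'' actually requires. As written, your proposal asserts the conclusion of this final step without supplying either the weighted estimate or the winding argument, so it is incomplete; the paper's explicit subsolution avoids the whole issue and is moreover quantitative.
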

\begin{proof}
	Let 
	$$
	B(x):= \frac{\partial H}{\partial p}(x, u'_0(x),u_0(x)),\quad x\in\ci.
	$$
	By (A) one can deduce that $B(x)\neq 0$ for all $x\in\ci$.
	Let $Z:=\int_0^1 -\big( B(\tau )\big)^{-1} d\tau\in \R \backslash \{0\}.$ Then it is clear that $|Z|=\mathcal{T}$. 
	For any fixed $x_0\in \ci$, we  define  
	\begin{equation}\label{eq:w}
		w (x,t):=u_0(x)+\epsilon+\epsilon \sin \Big(-\frac{\pi}{2}+  f(x)-f(x_0)+  \frac{2\pi}{Z}t\Big) ,\quad (x,t) \in \mathbb{S}\times[0,+\infty), 
	\end{equation}
	where
	$$
	f(x):=2\pi\cdot  \frac{\int_0^x \big( B(\tau )\big)^{-1} d\tau}{\int_0^1 \big( B(\tau )\big)^{-1} d\tau}= \frac{2\pi}{Z} \int_0^x -\big( B(\tau )\big)^{-1} d\tau>0,
	$$
	and $\epsilon>0$ is a parameter which will be determined later. 
	For any $\nu\in[0,1]$, let
	\begin{equation*}\label{eq:h_pp}
	 \widehat H^\nu _{pp}(x,t):=\int_0^1 s\int_0^1 \frac{\partial^2 H}{\partial p^2} \Big (x, u_0^\prime(x)+ \nu s \tau \cos\Big(-\frac{\pi}{x}+ f(x)-f(x_0)+ \frac{2\pi}{Z}t\Big)f^\prime(x),u_0(x)  \Big )  d\tau \ ds.  		
	\end{equation*}
	Let
	$$
	M_0:=\max_{\substack{ (x,t)\in \mathbb{S}\times [0,+\infty) \\
			\nu\in[0,1] }} \Big|\widehat H^\nu_{pp}(x,t) \Big |.
	$$
Take	
	\[
	\epsilon: =\min\Big\{\frac{1}{2}\frac{\delta Z^2}{4\pi^2 M_0} \cdot \min_{x\in \mathbb{S}} B^2(x),1\Big\},
	\]
where $\delta$ is as in (H4).
	
	We assert that 	
	$w(x,t)$ defined in \eqref{eq:w} is a subsolution of \eqref{eq:HJe} with $w(x_0,0)=u_0(x_0)$.
	It is clear that
		$$
		w(x_0,0)=u_0(x_0)+\epsilon+\epsilon \sin(-\frac{\pi}{2})=u_0(x_0).
		$$
	For any $x\in \mathbb{S}$, $t\geqslant 0$, let $F(x,t):= -\frac{\pi}{2}+ f(x)-f(x_0)+ \frac{2\pi}{Z}t$. In view of  $H \big(x, u_0^\prime (x) ,u_0(x) \big )=0 $ and $f^\prime(x)=-\frac{2\pi }{Z\cdot B(x)} $, by direct computation we have that
	\begin{align*}
		&\, \partial_t w (x,t)+H(x,\partial_x w (x,t), w (x,t))\\
		\leqslant &\, \epsilon \cos F(x,t)\cdot \frac{2\pi}{Z} +   H \Big(x, u_0^\prime (x)+\epsilon\cos F(x,t)\cdot f^\prime(x) ,u_0(x) \Big )- \delta \Big(\epsilon+\epsilon \sin F(x,t) \Big) \\
		\leqslant&\, \epsilon \cos F(x,t)\cdot \frac{2\pi}{Z}+   H \Big(x, u_0^\prime (x) ,u_0(x) \Big )+\epsilon\cos F(x,t) \cdot f^\prime(x)\cdot B(x) \\
		&\, +\epsilon^2 \cos^2 F(x,t) \cdot (f^\prime(x))^2  \cdot M_0- \delta \Big(\epsilon+\epsilon \sin F(x,t) \Big)\\
		\leqslant&\, \epsilon \cos F(x,t)\cdot \Big( \frac{2\pi}{Z}+f^\prime(x)\cdot B(x)  \Big)  +\epsilon^2 \cos^2 F(x,t) \cdot (f^\prime(x))^2  \cdot M_0- \delta \Big(\epsilon+\epsilon \sin F(x,t) \Big) \\
		\leqslant &\,  \epsilon^2 \cos^2 F(x,t)\cdot (f^\prime(x))^2 M_0 - \delta \Big(\epsilon+\epsilon \sin  F(x,t) \Big) \\
		\leqslant &\, \epsilon \delta \Big( \frac{1}{2} \cos^2 F(x,t) -1 -\sin F(x,t)   \Big)
		= -\epsilon \delta \cdot \frac{1}{2} \Big(1 + \sin F(x,t) \Big)^2
		\leqslant 0.
	\end{align*}
	So far, we have proved that $w (x,t)$ is a subsolution of \eqref{eq:HJe}.

	From the classical comparison principle (see, for instance, \cite{L}), one can deduce that
	$T_t w(x,0) \geqslant  w (x,t)$, since $T_t w(x,0)$ is a viscosity solution of \eqref{eq:HJe} while $w (x,t)$ is a viscosity subsolution of \eqref{eq:HJe}. Thus,  we get that
	$$
	h_{x_0,u_0(x_0)}(x,t)=h_{x_0,w(x_0,0)}(x,t)\geqslant T_t w(x,0) \geqslant  w (x,t), 
	$$
	which implies that  
	$$
	\limsup_{t\to +\infty} h_{x_0,u_0(x_0)}(x_0,t) > u_0(x_0)+\epsilon>u_0(x_0)=\liminf_{t\to +\infty} h_{x_0,u_0(x_0)}(x_0,t).
	$$	
	The proof is complete.
\end{proof}

\begin{lemma}\label{thm1} Let $u(t):=u_0(x(t))$. Then
	$$
	u(x,t):=\lim_{n\to +\infty}h_{x(0),u(0)}(x,n\mathcal{T}+t)
	$$
	is a nontrivial $\mathcal{T}$-periodic viscosity solution of equation \eqref{eq:HJe}.
\end{lemma}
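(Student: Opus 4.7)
The plan is to realize $u(x,t)$ as the monotone decreasing limit of the sequence $\varphi_n(x,t):=h_{x(0),u(0)}(x,n\mathcal{T}+t)$, using the fact that the periodic orbit through $(x(0),u(0))$ carries the implicit variational principle back to itself after time $\mathcal{T}$.

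The first and key step is to establish the normalization
\[
h_{x(0),u(0)}(x(0),\mathcal{T})=u(0).
\]
Since $u_0$ is a classical solution of \eqref{eq:HJs} and $x(\cdot)$ is a characteristic, the loop $x|_{[0,\mathcal{T}]}$ is $(u_0,L,0)$-calibrated. Tracking the implicit variational principle $\dot{u}=L(x,\dot{x},u)$ along this loop, starting from $u(0)=u_0(x(0))$, the solution is exactly $u_0(x(\cdot))$, and returns to $u_0(x(\mathcal{T}))=u(0)$, giving the bound $\leq u(0)$; the reverse inequality comes from the domination property of the (sub)solution $u_0$ applied to any competing curve. With this identity in hand, the semigroup-type Markov property of $h$ from \cite{WWY1} (i.e., $h_{x_0,u_0}(z,s+t)\leq h_{y,h_{x_0,u_0}(y,s)}(z,t)$ for every $y\in M$) applied with $y=x(\mathcal{T})=x(0)$ and $s=\mathcal{T}$ yields the monotonicity
\[
\varphi_{n+1}(x,t)\leq\varphi_n(x,t),\qquad (x,t)\in\ci\times[0,+\infty).
\]

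Second, I would produce a uniform lower bound $\varphi_n(x,t)\geq u_0(x)$ by the same domination inequality for the classical subsolution $u_0$. Monotonicity together with this lower bound gives pointwise convergence $\varphi_n\downarrow u$, and the equi-Lipschitz estimates on $h$ from \cite{WWY1} promote this to locally uniform convergence on $\ci\times[0,+\infty)$. Since each $\varphi_n$ is a viscosity solution of \eqref{eq:HJe} on $\ci\times[0,+\infty)$, the stability of viscosity solutions under uniform limits identifies $u$ as a viscosity solution of \eqref{eq:HJe}. Periodicity is then immediate by reindexing: $u(x,\mathcal{T}+t)=\lim_n\varphi_{n+1}(x,t)=\lim_n\varphi_n(x,t)=u(x,t)$.

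Finally, nontriviality follows from Lemma \ref{lem251} by contradiction: if $u(x(0),\cdot)$ were constant in $t$, then locally uniform convergence of $\varphi_n$ on $\ci\times[0,\mathcal{T}]$ would force $\lim_{t\to+\infty}h_{x(0),u(0)}(x(0),t)$ to exist along the full continuous parameter, contradicting $\liminf_{t\to+\infty}h_{x(0),u(0)}(x(0),t)<\limsup_{t\to+\infty}h_{x(0),u(0)}(x(0),t)$. The main obstacle is the clean derivation of the normalization $h_{x(0),u(0)}(x(0),\mathcal{T})=u(0)$ and the correct direction of the Markov inequality; once these are secured, monotonicity, convergence, periodicity, and nontriviality fall out as above.
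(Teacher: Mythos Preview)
Your proposal is correct and follows the same architecture as the paper: establish the normalization $h_{x(0),u(0)}(x(0),\mathcal{T})=u(0)$, use the Markov property with $y=x(0)$ to get monotonicity of $\varphi_n$, produce a lower bound so the monotone limit exists, identify the limit as a $\mathcal{T}$-periodic viscosity solution, and invoke Lemma~\ref{lem251} for nontriviality. The one substantive difference is the lower bound. You use $\varphi_n(x,t)\geq u_0(x)$, which follows in one line from $u_0=u_-$ being a fixed point of $T^-_t$ (namely $u_0(x)=\inf_{y}h_{y,u_0(y)}(x,s)\leq h_{x(0),u(0)}(x,s)$); the paper instead derives $h_{x(0),u(0)}(x,t)\geq h^{x(0),u(0)}(x,\mathcal{T})$ from the \emph{static} property of the periodic orbit. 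Your bound is sharper and shorter in the present setting, but the paper's argument is the one that transfers verbatim to the general statement recorded in the remark after Lemma~\ref{thm1}, where one only assumes a periodic static curve on an arbitrary manifold $M$ and cannot appeal to $u_-=u_+$.
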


\begin{proof}
	Since $(x(t),u(t))$ is $\mathcal{T}$-periodic and globally minimizing, then 
	\begin{align}\label{9-250}
	h_{x(0),u(0)} (x(0),\mathcal{T})=h_{x(0),u(0)} (x(\mathcal{T}),\mathcal{T})=u(\mathcal{T})=u(0).
	\end{align}
	Note that for each $x\in \ci$ and each $n\in\mathbb{N}$, 
\[	
h_{x(0),u(0)}(x, (n+1)\mathcal{T}) =  \inf_{y\in \ci}h_{y,h_{x(0),u(0)} (y,\mathcal{T})}(x,n\mathcal{T})\leqslant h_{x(0),h_{x(0),u(0)} (x(0),\mathcal{T})}(x,n\mathcal{T})
= h_{x(0),u(0)}(x, n\mathcal{T}). 
\]
Since $(x(t),u(t))$ is static, then for any $t\in \R$,
	$$
	u(t) \leqslant h_{x(0),u(0)}(x(t),s),\quad \forall s>0.
	$$
	Then for any $t>0$ and any $x\in \ci$, we have
	\begin{align*}
		u(0) \leqslant h_{x(0),u(0)}(x(0), t+\mathcal{T})=\inf_{y\in \ci}h_{y,h_{x(0),u(0)} (y,t)}(x(0),\mathcal{T})
		\leqslant   h_{x,h_{x(0),u(0)} (x,t)}(x(0),\mathcal{T}).
	\end{align*}
	So, we get that for any $t>0$ and any $x\in \ci$,
	$$
	h_{x(0),u(0)} (x,t) \geqslant h^{x(0),u(0)}(x,\mathcal{T}),
	$$
	which implies that $h_{x(0),u(0)} (x,t)$ is bounded from blow. 
	Thus one can define
	$$
	U(x):=\lim_{n\to \infty}h_{x(0),u(0)}(x, n\mathcal{T}).
	$$
	Let $u(x,t):=T_t U(x)$. Let $x_0:=x(0)$ and $v_0:=u(0)$. Then	
	\begin{align*}
		u(x,t)
		=  \inf_{y\in \ci} h_{y,U(y)}(x,t)
		=\lim_{n\to \infty}\inf_{y\in \ci} h_{y,h_{x_0,v_0}(y,n\mathcal{T}) }(x,t)
		=\lim_{n\to \infty} h_{x_0,v_0}(x,n\mathcal{T}+t)
	\end{align*}
	for any $(x,t)\in  \ci \times \R$.
	Notice that 
	\begin{align*}
		T_\mathcal{T} u(x,t)=&\, \inf_{y\in \ci} h_{y,u(y,t)}(x,\mathcal{T})\\
		=&\, \lim_{n\to \infty}\inf_{y\in \ci} h_{y,h_{x_0,v_0}(y,n\mathcal{T}+t) }(x,\mathcal{T})\\
		=&\, \lim_{n\to \infty} h_{x_0,v_0}\Big(x,(n+1)\mathcal{T}+t\Big)\\
		=&\, u(x,t).
	\end{align*}
Hence, $u(x,t)=T_\mathcal{T} u(x,t)=T_{\mathcal{T}+t}U(x)=u(x,t+\mathcal{T})$, for all $(x,t)\in \ci\times\R$.

Since $\liminf_{t\to +\infty} h_{x(0),u(0)}(x(0),t)\neq \limsup_{t\to +\infty} h_{x(0),u(0)}(x(0),t)$, then
 $u(x,t)$ is a nontrivial $\mathcal{T}$-periodic solution of equation \eqref{eq:HJe}.
\end{proof}

\begin{remark} Some explanations for the above lemma. 
	In fact, we can get a more general result stated as below: 
	 Assume $H:T^*M\times\mathbb{R}\to \mathbb{R}$, $H=H(x,p,u)$, is a $C^3$ function satisfying (H1)-(H3). If
			\begin{enumerate}
				\item[(i)] there exists a $\mathcal{T}'$-periodic static curve $(x(\cdot),u(\cdot)):\R \to M \times \R $ with $\mathcal{T}'>0$;
				\item[(ii)] $\liminf_{t\to +\infty} h_{x(0),u(0)}(x(0),t)<\limsup_{t\to +\infty} h_{x(0),u(0)}(x(0),t)$,
			\end{enumerate} 
			then 
			$$
			u(x,t):=\lim_{n\to +\infty}h_{x(0),u(0)}(x,n\mathcal{T}'+t)
			$$
			is a nontrivial $\mathcal{T}'$-periodic viscosity solution of equation 
			\begin{align*}
				\partial_tw(x,t)+H(x,\partial_xw(x,t),w(x,t))=0, \quad (x,t)\in M\times [0,+\infty).
			\end{align*}
		Here, $M$ is an arbitrary smooth, connected, compact Riemannian manifold without boundary. See the  Appendix for definitions of globally minimizing curves and static curves. The proof is quite similar to the one of Lemma \ref{thm1} and thus we omit it.
\end{remark}

\begin{lemma}\label{leml}
	If  equation \eqref{eq:HJe} has a nontrivial $\mathcal{T}$-periodic viscosity solution, then for each $n\in \mathbb{N}$, equation \eqref{eq:HJe} has infinitely many nontrivial $\frac{\mathcal{T}}{n}$-periodic viscosity solutions.
\end{lemma}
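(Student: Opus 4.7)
The plan is to exploit two features of \eqref{eq:HJe}: the time-translation invariance (the equation is autonomous in $t$), and the flexibility of the sub/supersolution construction developed in Lemmas \ref{lem251} and \ref{thm1}.

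Given the nontrivial $\mathcal{T}$-periodic solution $u(x,t)$ furnished by Lemma \ref{thm1}, for each $s\in\R$ the shift $u_s(x,t):=u(x,t+s)$ is again a viscosity solution of \eqref{eq:HJe} by autonomy, and is still $\mathcal{T}$-periodic. If $\mathcal{T}_0\in(0,\mathcal{T}]$ denotes the minimal period of $u$ (positive by nontriviality and the closed-subgroup structure of the periods of a continuous nonconstant function), then $u_s=u_{s'}$ iff $s-s'\in\mathcal{T}_0\mathbb{Z}$, so the fundamental domain $[0,\mathcal{T}_0)$ parametrizes uncountably many distinct $\mathcal{T}$-periodic viscosity solutions. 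This settles the case $n=1$.

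For $n\geqslant 2$ I would adapt the subsolution construction of Lemma \ref{lem251} by replacing the frequency $\frac{2\pi}{Z}$ in the sinusoidal ansatz with $\frac{2\pi n}{Z}$, and rescaling $f$ by the factor $n$. The estimate of that lemma goes through for $\epsilon$ sufficiently small (the smallness threshold scales like $n^{-2}$), producing a $\frac{\mathcal{T}}{n}$-periodic subsolution $w^{(n)}$ of \eqref{eq:HJe} passing through $u_0(x_0)$ at $(x_0,0)$. Mimicking the argument of Lemma \ref{thm1}, one shows that iterating the semigroup $T_{\mathcal{T}/n}$ on $w^{(n)}(\cdot,0)$ yields a monotone sequence whose limit is a fixed point of $T_{\mathcal{T}/n}$, i.e., a $\frac{\mathcal{T}}{n}$-periodic viscosity solution of \eqref{eq:HJe}. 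The strict separation $\liminf_{t\to+\infty}h_{x(0),u(0)}(x(0),t)<\limsup_{t\to+\infty}h_{x(0),u(0)}(x(0),t)$ inherited from the scaled subsolution guarantees that the periodic solution produced is nontrivial. Applying the time-shift argument of the previous paragraph to this new solution then yields infinitely many distinct nontrivial $\frac{\mathcal{T}}{n}$-periodic viscosity solutions.

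The main obstacle is the convergence step at the shorter time $\frac{\mathcal{T}}{n}$: in Lemma \ref{thm1} the $\mathcal{T}$-periodicity of the limit rested on the identity $h_{x(0),u(0)}(x(0),\mathcal{T})=u(0)$ supplied by the static curve, which has no direct analogue at $\frac{\mathcal{T}}{n}$ since the static curve does not close in that shorter time. The remedy is a sub/supersolution sandwich: the $\frac{\mathcal{T}}{n}$-periodic subsolution supplies the lower bound and monotonicity of the iterates, while a companion $\frac{\mathcal{T}}{n}$-periodic supersolution (obtained from a symmetric oscillatory ansatz, with the sign of the quadratic correction in the $H$-expansion reversed in the direction favorable for a supersolution) supplies the upper bound needed to pass to the limit. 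Verifying that the supersolution can be arranged to dominate the subsolution pointwise — so that the comparison principle traps the iterates between them — is the delicate technical point.
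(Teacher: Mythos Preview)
Your treatment of the case $n=1$ (time shifts of a nontrivial periodic solution give uncountably many) is fine and is exactly what the paper does for the ``infinitely many'' part.

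For $n\geqslant 2$, however, you miss a much simpler device that the paper uses, and instead embark on a program with a gap you yourself flag. The paper's argument is a one-liner: given the nontrivial $\mathcal{T}$-periodic solution $w$, set
\[
v_n(x,t):=\min\Big\{\,w(x,t),\ w\big(x,t+\tfrac{\mathcal{T}}{n}\big),\ \ldots,\ w\big(x,t+\tfrac{(n-1)\mathcal{T}}{n}\big)\Big\}.
\]
This is manifestly $\frac{\mathcal{T}}{n}$-periodic in $t$. It is again a viscosity solution of \eqref{eq:HJe} because the semigroup $T_t$ commutes with finite minima: from the representation $T_t\varphi(x)=\inf_y h_{y,\varphi(y)}(x,t)$ and the monotonicity of $h$ in the initial value (Proposition \ref{pr-af}(1)(i)) one gets $T_t(\min(\varphi,\psi))=\min(T_t\varphi,T_t\psi)$. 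Nontriviality of $v_n$ follows since any stationary viscosity solution must be $u_0$ by Lemma \ref{lem00}, while $v_n\leqslant w$ with $w$ nontrivial. Time shifts and further minima over shifts then give infinitely many.

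By contrast, your proposed route re-runs the subsolution construction of Lemma \ref{lem251} at frequency $n$ and then tries to repeat Lemma \ref{thm1} with $\mathcal{T}$ replaced by $\mathcal{T}/n$. The obstacle you identify is real: the monotonicity and lower bound in Lemma \ref{thm1} come from the identity $h_{x(0),u(0)}(x(0),\mathcal{T})=u(0)$, which uses that the static curve closes up at time $\mathcal{T}$; it does \emph{not} close at $\mathcal{T}/n$ for $n\geqslant 2$, so neither the decreasing-in-$n$ property of $h_{x(0),u(0)}(x,n\mathcal{T}/n+\cdot)$ nor the lower bound survives. Your proposed super/subsolution sandwich is speculative, and in the strictly decreasing regime $\frac{\partial H}{\partial u}\leqslant -\delta$ the sign structure actually works \emph{against} building a smooth periodic supersolution by the same oscillatory ansatz (the quadratic error has the wrong sign). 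More to the point, none of this machinery is needed: the lemma only assumes \emph{existence} of a nontrivial $\mathcal{T}$-periodic solution, and the minimum-over-evenly-spaced-shifts trick extracts the $\frac{\mathcal{T}}{n}$-periodic ones directly from that hypothesis.
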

\begin{proof}
	If $w(x,t)$ is a nontrivial $\mathcal{T}$-periodic viscosity solution of \eqref{eq:HJe}, then for any $a\in [0,\mathcal{T}]$, $w(x,t+a)$ is also a nontrivial $\mathcal{T}$-periodic viscosity solution of \eqref{eq:HJe}. For each given $n\in \mathbb{N}$, define 
	$$
	v_n(x,t):=\min \Big\{w(x,t), w(x,t+\frac{\mathcal{T}}{n} ), w(x,t+\frac{2\mathcal{T}}{n}),\cdots,w(x,t+\frac{(n-1)\mathcal{T}}{n})   \Big\}
	$$ 
	which is a nontrivial $\frac{\mathcal{T}}{n}$-periodic viscosity solution of \eqref{eq:HJe}.
	
	 Hence for any $a\in [0,\frac{\mathcal{T}}{n} ]$, $v_n(x,t+a)$ is also a nontrivial $\frac{\mathcal{T}}{n}$-periodic viscosity  solution of \eqref{eq:HJe} and for any   $a_1$, $a_2 \in [0,\frac{\mathcal{T}}{n}]$, 
	$$
	\widetilde v_n(x,t):=\min\{v_n(x,t+a_1), v_n(x,t+a_2)  \}
	$$
	is also a $\frac{\mathcal{T}}{n}$-periodic viscosity solution of \eqref{eq:HJe}. Therefore,  equation \eqref{eq:HJe} has infinitely many nontrivial $\frac{\mathcal{T}}{n}$-periodic viscosity solutions.
\end{proof}

Theorem \ref{ma} is a direct consequence of Lemmas \ref{lem00}, \ref{lem0000}, \ref{lem251}, \ref{thm1},  \ref{leml}.

\subsection{Proof of Theorem \ref{ma1}}
In order to prove Theorem \ref{ma1}, we only need to show item (3) since items (1) and (2) are known results.

\begin{lemma}\label{lem:7}
	Under the assumptions imposed on $H$ in item (3) in Theorem \ref{ma1}, there exists $\lambda_0>0$ such that  the contact vector field generated by $H(x,p,\lambda u)$ has no fixed points on $\Lambda_{u_\lambda ^+}$ for $\lambda\in  (0,\lambda_0)$, where $u_\lambda ^+$ denotes the unique forward weak KAM solution of $H(x,u'(x),\lambda u(x))=0$.
\end{lemma}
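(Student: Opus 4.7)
The plan is to translate the absence of fixed points of the contact vector field on $\Lambda_{u_\lambda^+}$ into a sign condition on the $p$-minimum of $H$, and then derive this sign condition from hypothesis (C) by a perturbation argument that rests on a uniform $L^\infty$ bound for $u_\lambda^+$ as $\lambda\to 0^+$. A point $(x,p,u)$ can be fixed by the contact vector field of $H(\cdot,\cdot,\lambda\cdot)$ only if $\dot x = \frac{\partial H}{\partial p}(x,p,\lambda u) = 0$; on $\Lambda_{u_\lambda^+}$ one has $H(x, (u_\lambda^+)'(x),\lambda u_\lambda^+(x)) = 0$, so by strict convexity (H1) in $p$ the joint vanishing of $H$ and $\frac{\partial H}{\partial p}$ at that point is equivalent to $\alpha(x,\lambda u_\lambda^+(x)) = 0$, where
\[
\alpha(x,v) := \min_{p\in\R} H(x,p,v).
\]
Thus I only need to produce $\alpha(x,\lambda u_\lambda^+(x)) < 0$ uniformly in $x\in\ci$ for $\lambda$ sufficiently small.

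Hypothesis (C) reads $\alpha(x,0)<0$ on $\ci$, and compactness gives $\sup_{x\in\ci}\alpha(x,0)\leq -c_0<0$. From (H3) and the envelope theorem $|\partial_v \alpha|\leq\kappa$, so
\[
\alpha(x,\lambda u_\lambda^+(x)) \leq -c_0 + \kappa\, \lambda\, \|u_\lambda^+\|_\infty.
\]
Hence it suffices to show $\|u_\lambda^+\|_\infty$ stays bounded as $\lambda\to 0^+$ (or merely $\lambda\|u_\lambda^+\|_\infty\to 0$): then choosing $\lambda_0$ so that $\kappa\lambda_0\|u_\lambda^+\|_\infty < c_0/2$ for all $\lambda\in(0,\lambda_0)$ forces $\alpha(x,\lambda u_\lambda^+(x))<-c_0/2<0$, contradicting any fixed-point assumption.

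To produce the uniform bound I would fix any viscosity solution $v$ of the classical equation $H(x,v'(x),0)=0$, which exists because the Ma\~n\'e critical value of $H(\cdot,\cdot,0)$ has been normalized to $0$; such $v$ is Lipschitz and bounded on $\ci$. Every shift $v+C$ is again a classical solution at $\lambda=0$, and the elementary identity
\[
H(x,v',\lambda(v+C)) - H(x,v',0) = \lambda(v+C)\int_0^1 \frac{\partial H}{\partial u}(x,v', s\lambda(v+C))\,ds,
\]
together with (H4), shows that $w_+ := v-\min_\ci v\geq 0$ is a viscosity subsolution of $H(x,u',\lambda u)=0$, while $w_- := v-\max_\ci v\leq 0$ is a viscosity supersolution, both satisfying $\|w_\pm\|_\infty\leq \operatorname{osc}(v)$ uniformly in $\lambda$. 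The companion backward weak KAM solution $u_\lambda^- := \lim_{t\to+\infty} T^-_t u_\lambda^+$, which is a genuine viscosity solution of the stationary equation by \cite{WY2021}, satisfies the standard viscosity comparison for the strictly-decreasing equation, yielding $w_- \leq u_\lambda^- \leq w_+$, and in particular a bound $\|u_\lambda^-\|_\infty\leq \operatorname{osc}(v)$ uniform in $\lambda$.

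The main obstacle is passing from the bound on $u_\lambda^-$ to one on $u_\lambda^+$: one has $u_\lambda^+\geq u_\lambda^-$ (so the lower bound $u_\lambda^+\geq w_-$ is immediate), but the upper bound requires controlling the gap $u_\lambda^+-u_\lambda^-$ uniformly as $\lambda\to 0^+$. My plan is to use the stability of forward weak KAM solutions under perturbation of the Hamiltonian developed in \cite{WWY2,WWY3}: as $\lambda\to 0^+$, $H_\lambda(x,p,u)=H(x,p,\lambda u)$ converges smoothly to $H(x,p,0)$ on compact sets, and $u_\lambda^+$ should converge (after an appropriate normalization) to a viscosity solution of $H(x,v'(x),0)=0$, hence remain $O(1)$. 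Once this uniform bound is in hand, combining it with the first two steps yields $\alpha(x,\lambda u_\lambda^+(x))<0$ uniformly for $\lambda\in(0,\lambda_0)$ and thus the absence of fixed points on $\Lambda_{u_\lambda^+}$.
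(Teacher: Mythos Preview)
Your reduction is correct and matches the paper's: a fixed point of the contact vector field on $\Lambda_{u_\lambda^+}$ forces $\frac{\partial H}{\partial p}=0$ together with $H=0$, which by strict convexity is exactly $\alpha(x,\lambda u_\lambda^+(x))=0$ with $\alpha(x,v)=\min_p H(x,p,v)$. Your direct Lipschitz estimate $\alpha(x,\lambda u_\lambda^+(x))\leq -c_0+\kappa\lambda\|u_\lambda^+\|_\infty$ is a clean quantitative version of the paper's compactness/contradiction argument, and both approaches pivot on the same single input: a uniform bound $\sup_{\lambda\in(0,1]}\|u_\lambda^+\|_\infty<\infty$. The paper obtains this bound by a direct citation (\cite[Lemma~2.3]{WYZ0}) and then runs the limit argument; once you have that bound, your proof is complete and arguably more transparent.

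The genuine gap is in your attempt to manufacture the uniform bound from scratch. There is no ``standard viscosity comparison for the strictly-decreasing equation'': under (H4) the Hamiltonian is \emph{anti}-monotone in $u$, so the usual proper-equation comparison principle fails, and one cannot conclude $w_-\le u_\lambda^-\le w_+$ from your sub/supersolution sandwich. (Indeed, the failure of comparison is precisely what produces the nontrivial dynamics (D1)--(D3) recalled in the introduction.) In addition, your inequality $u_\lambda^+\ge u_\lambda^-$ is reversed: since $u_\lambda^+$ is dominated, $T_t^- u_\lambda^+\ge u_\lambda^+$ for all $t\ge 0$, whence $u_\lambda^-=\lim_{t\to\infty}T_t^- u_\lambda^+\ge u_\lambda^+$; so a bound on $u_\lambda^-$ would give you only an \emph{upper} bound on $u_\lambda^+$, not the lower one you claim. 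The vague appeal to ``stability of forward weak KAM solutions'' at the end does not repair this. The cleanest fix is simply to invoke the uniform bound from \cite[Lemma~2.3]{WYZ0}, as the paper does, and then keep your direct estimate on~$\alpha$.
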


\begin{remark}
	It is clear that if Lemma \ref{lem:7} holds true, then condition (A) holds true for $H(x,p,\lambda u)$ for $\lambda\in(0,\lambda_0)$. Then by Theorem \ref{ma}, one can deduce that item (3) in Theorem \ref{ma2}.
\end{remark}

\begin{proof}
	By \cite[Lemma 2.3]{WYZ0},  the sequence of $\{u^+_\lambda\} $ is uniformly bounded on $\lambda\in [0,1]$. Let
	$$
	M_1:=\max_{\lambda\in [0,1]} \|u^+_\lambda \|_\infty.
	$$
	
	It suffices to show that there exists  $\lambda_0>0$ such that for $\lambda\in (0,\lambda_0) $, there is no fixed points of contact Hamiltonian vector field generated by $H(x,p,\lambda u)$ on  
	\[
	\Omega_\lambda(M_1):= \big\{(x,p,u)\in T^* \ci\times [-M_1,M_1]:H(x,p,\lambda u)=0\big\}. 
	\]

Assume by contradiction that there exist  $\{\lambda_n\}_{n\in \mathbb{N}}$ with 
	$
	\lambda_n\searrow0, 
	$ 
	and fixed points $(x_n,p_n,u_n)\in \Omega_{\lambda_n}(M_1)$ of the contact Hamiltonian vector fields. We have that
	$$
	\begin{cases}
		0= \dot{x}_n=\frac{\partial H }{\partial p}(x_n,p_n,\lambda_n u_n),\\
		H(x_n,p_n,\lambda_n u_n)=0.
	\end{cases}
	$$
	In view of $H(x_n,p_n,\lambda_n u_n)=0$ and $p_n=\frac{\partial L}{\partial \dot{x}}(x_n,0,\lambda_n u_n)$, one obtains
	\begin{align*}
		0=&\, H(x_n,\frac{\partial L}{\partial \dot{x}}(x_n,0,\lambda_n u_n),\lambda_n u_n)\\
		=&\, H(x_n,\frac{\partial L}{\partial \dot{x}}(x_n,0,0),0)+f(\lambda_n ,x_n,u_n),
	\end{align*}
	where 
	\begin{align*}
		&\,f(\lambda_n ,x_n,u_n):= \lambda_n u_n \cdot \int_0^1 \frac{\partial H}{\partial u} (x_n,\frac{\partial L}{\partial \dot{x}}(x_n,0,\lambda_n u_n),\tau \lambda_n u_n )\  d\tau \\
		&\,+\int_0^1 \frac{\partial H}{\partial p}\Big(x_n, \sigma \frac{\partial L}{\partial \dot{x}}(x_n,0,\lambda_n u_n)+(1-\sigma )\frac{\partial L}{\partial \dot{x}}(x_n,0,0), 0 \Big) \ d\sigma \cdot \Big(\frac{\partial L}{\partial \dot{x}}(x_n,0,\lambda_n u_n)
		-\frac{\partial L}{\partial \dot{x}}(x_n,0,0) \Big).  
	\end{align*}
Since $x_n\in\mathbb{S}$, we can take a convergent subsequence $\{x_{n_k}\}$ with $\lim_{k\to\infty}x_{n_k}=x^*$ for some $x^*\in\ci$.
Note that
	$$
	\lim_{ k \to +\infty} f(\lambda_{n_k},x_{n_k},u_{n_k})=0.
	$$
Hence
	\begin{align*}
	0=&\,\lim_{k\to +\infty} H(x_{n_k},\frac{\partial L}{\partial \dot{x}}(x_{n_k},0,\lambda_{n_k} u_{n_k}),\lambda_{n_k} u_{n_k}) \\
		=&\,\lim_{k\to +\infty} H(x_{n_k},\frac{\partial L}{\partial \dot{x}}(x_{n_k},0,0),0)  +\lim_{n\to +\infty}f(\lambda_{n_k},x_{n_k},u_{n_k} )\\
		=&\,H(x^*,\frac{\partial L}{\partial \dot{x}}(x^*,0,0),0)\\
		=&\, \min_{p\in \R} H(x^*,p,0),
	\end{align*}
	a contradiction to assumption (C).
\end{proof}

\subsection{Proof of Theorem \ref{ma2}}

\begin{proof}[Proof of Theorem \ref{ma2}]
Fix $\varphi\in C(\ci,\R)$ satisfying 
\begin{align}\label{9-350}
\min_{x\in\mathbb{S}}(\varphi(x)-u_+(x))=0<\max_{x\in\mathbb{S}}(\varphi(x)-u_+(x)). 
\end{align}
For any $\epsilon>0$, let $O_\epsilon$ denote the $\epsilon$-neighborhood of $I_\varphi:=\{x\in\ci: \varphi(x)=u_+(x)\}$. We assert that there exists $t_1>0$ such that 
	$$
	T_t \varphi(x)= \inf_{y\in O_\epsilon}h_{y,\varphi(y)}(x,t) \quad \forall (x,t) \in \mathbb{S} \times [t_1,+\infty). 
	$$
In fact, let $\sigma:=\min_{x\in \ci \backslash O_\epsilon}(\varphi(x)-u_+(x))$. Then by the definition of $I_\varphi$ and \eqref{9-350}, $\sigma>0$ is well defined. Let $u_\sigma:=u_++\sigma$. Then $\varphi(x)\geqslant u_\sigma (x)$, $\forall x\in \ci \backslash O_\epsilon$. By (D3), we have that 
	\begin{align}\label{8-8}
	\lim_{t\to+\infty}T_tu_\sigma(x)=+\infty,\quad \text{uniformly on}\ x\in \ci .
	\end{align}
	From (D1) there exist  $K>0$ independent of $\varphi$ and $T_\varphi>0$ such that 
	$$
	|T_t \varphi(x) | \leqslant   K , \quad \forall (x,t)\in \ci \times (T_\varphi,+\infty).
	$$ 
	From \eqref{8-8}, there is $t_1>T_\varphi$ such that
	\[
	T_tu_\sigma(x)\geqslant  K +1,\quad \forall (x,t)\in \ci \times (t_1,+\infty),
	\]
	where $t_1$ depends on $\epsilon$ and $\varphi$.
	Thus, for any $t\geqslant t_1$ and any $x\in \ci$, we get 
	\begin{align}\label{7-4}
		\inf_{y\in \ci \backslash O_\epsilon}h_{y,\varphi (y)}(x,t)\geqslant\, \inf_{y\in \ci \backslash O_\epsilon}h_{y,u_\sigma(y)}(x,t) 
		\geqslant \, \inf_{y\in  \ci}h_{y,u_\sigma(y)}(x,t)=T_tu_\sigma(x)\geqslant K +1.
	\end{align}
	 Hence, for any $t\geqslant t_1$, any $x\in \ci$, by \eqref{7-4} we have that
	\begin{align*}
		T_t\varphi (x)=\,
		\inf_{y\in \ci }h_{y,\varphi (y)}(x,t) 
		=\,\min\big\{\inf_{y\in O_\epsilon}h_{y,\varphi (y)}(x,t),\min_{y\in \ci \backslash O_\epsilon}h_{y,\varphi (y)}(x,t)\big\}
		=\inf_{y\in O_\epsilon}h_{y,\varphi (y)}(x,t).
	\end{align*}
	So far, we have shown the assertion.

	On one hand, since $I_\varphi \subset O_\epsilon$, we get
	$$
	T_t \varphi(x)= \inf_{y\in O_\epsilon}h_{y,\varphi(y)}(x,t)\leqslant \inf_{y\in I_\varphi }h_{y,\varphi(y)}(x,t)=\inf_{y\in I_\varphi }h_{y,u_+(y)}(x,t),\quad (x,t)\in\ci\times[t_1,+\infty).
	$$
	Thus, for any $(x,t) \in \mathbb{S} \times [0,+\infty)$, 
	$$
	T_{n\mathcal{T}+t} \varphi(x) \leqslant \inf_{y\in I_\varphi }h_{y,u_+(y)}(x,n\mathcal{T}+t), 
	$$
for any	$n\in \mathbb{N}$ with $n\geqslant \frac{t_1-t}{\mathcal{T}}$,
By \eqref{9-250} and Proposition \ref{pr-af} (4), we have 
  $h_{y,u_+(y)}(x,(n+1)\mathcal{T}+t)\leqslant h_{y,u_+(y)}(x,n\mathcal{T}+t)$. One can deduce that
	\begin{align}\label{7-20}
	\limsup_{n\to +\infty}  T_{n\mathcal{T}+t}\varphi(x) \leqslant \lim_{n\to +\infty} \inf_{y\in I_\varphi }h_{y,u_+(y)}(x,n\mathcal{T}+t)=:U_1(x,t). 
	\end{align}

	On the other hand, for $ n \geqslant  \frac{t_1-t}{\mathcal{T}} $ we get that
	$$
	T_{n\mathcal{T}+t} \varphi(x) \geqslant \inf_{y\in \overline{O}_\epsilon }h_{y,u_+(y)}(x,n\mathcal{T}+t),\quad \forall (x,t) \in \mathbb{S} \times [0,+\infty). 
	$$
And thus,  
\begin{align}\label{7-21}
	\liminf_{n\to +\infty}  T_{n\mathcal{T}+t}\varphi(x) \geqslant \lim_{n\to +\infty} \inf_{y\in \overline{O}_\epsilon }h_{y,u_+(y)}(x,n\mathcal{T}+t)=:U^\epsilon_2(x,t). 
\end{align}
By \eqref{7-20} and \eqref{7-21}, we have $U_1\geqslant U^\epsilon_2$.

Next, we prove that
	\begin{align}\label{7-26}
	\lim_{  \epsilon \to 0^+ } |U_1(x,t) - U^\epsilon_2(x,t)|=0, \quad\forall (x,t)\in\ci\times [0,+\infty). 
	\end{align}
	For any $x_1 \in I_\varphi$ and $x_2\in  \overline{O}_\epsilon$ with $|x_1-x_2|<\epsilon$, there exists $\tau_1$, $\tau_2\in [0,\mathcal{T}]$ such that $x(\tau_1)=x_1$ and $x(\tau_2)=x_2$, where $(x(t),\dot{x}(t),u_+(x(t)))$ is as in \eqref{9-199}.
	Without any loss of generality, assume $\tau_2>\tau_1$. Then
	$$
	h_{x(\tau_1),u_+(x(\tau_1))} (x(\tau_2) , \tau_2-\tau_1 )=u_+(x(\tau_2))
	$$
	which implies that
	\begin{align}\label{7-22}
		\begin{split}
	h_{x_1 ,u_+(x_1)}(x,n\mathcal{T}+t+\tau_2-\tau_1 )&=h_{x( \tau_1)  ,u_+(x( \tau_1))}(x,n\mathcal{T}+t+\tau_2-\tau_1)\\
		&=\inf_{y\in \ci} h_{y, h_{x( \tau_1),u_+(x( \tau_1))}(y,\tau_2-\tau_1 ) }(x, n\mathcal{T}+t ) \\
		&\leqslant h_{x( \tau_2)  ,u_+(x( \tau_2))}(x,n\mathcal{T}+t)\\
		&=h_{x_2 ,u_+(x_2)} (x,n\mathcal{T}+t).
		\end{split}
	\end{align}
	Let 
	$$
	v_i(x,t):= \lim_{n\to +\infty }  h_{x_i ,u_+(x_i)}(x,n\mathcal{T}+t), \quad i=1,2.
	$$
	Then by \eqref{7-22}
	$$
	v_1 (x,t+\tau_2-\tau_1 )\leqslant v_2 (x,t)  \quad \forall (x,t)\in \ci\times [0, +\infty).
	$$
	On the other hand, from $h_{x(\tau_2),u_+(x(\tau_2))} (x(\tau_1),\mathcal{T}+ \tau_1-\tau_2 )=u_+(x(\tau_1))$ and Proposition \ref{pr-af} (4), we have
	$$
	h_{x( \tau_2)  ,u_+(x( \tau_2))}(x,(n+1)\mathcal{T}+t) \leqslant h_{x_1,u_+(x_1)}(x,n\mathcal{T}+t+\tau_2-\tau_1 )
	$$
	which implies that 
	$$
	v_2 (x,t )\leqslant v_1 (x,t+\tau_2-\tau_1)  \quad \forall (x,t)\in \ci\times [0, +\infty).
	$$
	Therefore, we obtain that 
	$$v_2 (x,t )=v_1 (x,t+\tau_2-\tau_1), \quad 
	\forall (x,t)\in \ci\times [0, +\infty).
	$$
		Notice that
		\begin{align*}
			T_{\tau_2-\tau_1 } v_1(x,t)=&\, \inf_{y\in \ci} h_{y,v_1(y,t)}(x,\tau_2-\tau_1 )\\
			=&\, \inf_{y\in \ci} h_{y,\lim_{n\to \infty}h_{x_1,u_+(x_1)}(y,n\mathcal{T}+t) }(x,\tau_2-\tau_1 )  \\
			=&\, \lim_{n\to \infty}\inf_{y\in \ci} h_{y,h_{x_1,u_+(x_1)}(y,n\mathcal{T}+t) }(x,\tau_2-\tau_1 )\\
			=&\, \lim_{n\to \infty} h_{x_1,u_+(x_1)}\Big(x,n\mathcal{T}+t+\tau_2-\tau_1 \Big)\\
			=&\, v_1(x,t+\tau_2-\tau_1),
		\end{align*}
		which implies that	$v_1 (x,t+\tau_2-\tau_1) =T_{\tau_2-\tau_1}v_1(x,t)$.
	Since  $\tau_2-\tau_1\leqslant \frac{\epsilon}{\min_{s\in[0,T]}\|\dot x(s) \|} $, then 
	\begin{equation}\label{eq:5.5}
		\lim_{\epsilon\to 0^+} |v_2(x,t)-v_1(x,t) |=\lim_{\epsilon\to 0^+} |T_{\tau_2-\tau_1}v_1(x,t) -v_1(x,t) | =0.
	\end{equation}

For any $x'_2\in \overline{O}_\epsilon$, there exists $x'_1\in I_\varphi$ such that \eqref{eq:5.5} holds. So, we have  that
	\begin{align*}\label{7-25}
		\begin{split}
		U^\epsilon_2(x,t)=&\, \inf_{y\in \overline{O}_\epsilon }   \lim_{n\to +\infty }  h_{y ,u_+(y)}(x,n\mathcal{T}+t) \\
		\geqslant &\,  \inf_{z \in I_\varphi }  \lim_{n\to +\infty }  h_{z ,u_+(z)}(x,n\mathcal{T}+t) - K(\epsilon)\\
		= &\, U^\epsilon_1(x,t) -K(\epsilon),
		\end{split}
	\end{align*}
	where $K(\epsilon)$ only depends on $\epsilon$ and $\lim_{\epsilon \to 0^+}K(\epsilon )=0$. Hence, \eqref{7-26} holds true.

	Then we have $\liminf_{n\to +\infty}T_{n\mathcal{T}+t}\varphi(x)= \limsup_{n\to +\infty}T_{n\mathcal{T}+t}\varphi(x)$ and thus
	$$
	\lim_{n\to +\infty}T_{n\mathcal{T}+t}\varphi(x):=w_\varphi(x,t)
	$$
	exists. The proof of that $w_\varphi(x,t)$ is a nontrivial time periodic solution is quite similar to the one in the proof of Lemma \ref{lem00}. We omit it here for brevity.
	
\end{proof}

\section{Appendix}

Consider the contact Hamiltonian system
\begin{equation} \label{eq:ode}
	\left\{
	\begin{aligned}
		\dot x&=\frac{\partial H }{\partial p}(x,p,u),\\
		\dot p &=-\frac{\partial H }{\partial x}(x,p,u)-\frac{\partial H }{\partial u}(x,p,u) \cdot p, \quad (x,p,u)\in T^*M \times \R, \\ 
		\dot u&=\frac{\partial H }{\partial p}(x,p,u) \cdot p-H(x,p,u).
	\end{aligned}
	\right.
\end{equation}
Assume (H1)-(H3). Let us recall some known results on the Aubry-Mather-Ma\~n\'e-Fathi theory for \eqref{eq:ode} here. Most of the results in this section can be found in  \cite{WWY,WWY1,WWY2,WWY3}.

\medskip

\noindent $\bullet$ {\bf Variational principles}. 
First recall implicit variational principles for contact Hamiltonian system (\ref{eq:ode}), which connect contact Hamilton-Jacobi equations and contact Hamiltonian systems. 
\begin{proposition}\label{IVP}
	For any given $x_0\in M$, $u_0\in\mathbb{R}$, there exist two continuous functions $h_{x_0,u_0}(x,t)$ and $h^{x_0,u_0}(x,t)$ defined on $M\times(0,+\infty)$ satisfying	
	\begin{align}
		h_{x_0,u_0}(x,t)&=u_0+\inf_{\substack{\gamma(0)=x_0 \\  \gamma(t)=x} }\int_0^tL\big(\gamma(\tau),\dot{\gamma}(\tau),h_{x_0,u_0}(\gamma(\tau),\tau)\big)d\tau,\label{2-1}\\
		h^{x_0,u_0}(x,t)&=u_0-\inf_{\substack{\gamma(t)=x_0 \\  \gamma(0)=x } }\int_0^tL\big(\gamma(\tau),\dot{\gamma}(\tau),h^{x_0,u_0}(\gamma(\tau),t-\tau)\big)d\tau,\label{2-2}
	\end{align}
	where the infimums are taken among the Lipschitz continuous curves $\gamma:[0,t]\rightarrow M$.
	Moreover, the infimums in (\ref{2-1}) and \eqref{2-2} can be achieved. 
	If $\gamma_1$ and $\gamma_2$ are curves achieving the infimums \eqref{2-1} and \eqref{2-2} respectively, then $\gamma_1$ and $\gamma_2$ are of class $C^1$.
	Let 
	\begin{align*}
		x_1(s)&:=\gamma_1(s),\quad u_1(s):=h_{x_0,u_0}(\gamma_1(s),s),\,\,\,\qquad  p_1(s):=\frac{\partial L}{\partial \dot{x}}(\gamma_1(s),\dot{\gamma}_1(s),u_1(s)),\\
		x_2(s)&:=\gamma_2(s),\quad u_2(s):=h^{x_0,u_0}(\gamma_1(s),t-s),\quad   p_2(s):=\frac{\partial L}{\partial \dot{x}}(\gamma_2(s),\dot{\gamma}_2(s),u_2(s)).
	\end{align*}
	Then $(x_1(s),p_1(s),u_1(s))$ and $(x_2(s),p_2(s),u_2(s))$ satisfy equations \eqref{eq:ode} with 
	\begin{align*}
		x_1(0)=x_0, \quad x_1(t)=x, \quad \lim_{s\rightarrow 0^+}u_1(s)=u_0,\\
		x_2(0)=x, \quad x_2(t)=x_0, \quad \lim_{s\rightarrow t^-}u_2(s)=u_0.
	\end{align*}
\end{proposition}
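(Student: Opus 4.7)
The plan is to establish \eqref{2-1} first by a Banach fixed-point argument in a suitable function space, then to upgrade the resulting minimizing curves to $C^1$ solutions of the contact Hamiltonian system via Tonelli's direct method together with the Euler--Lagrange formalism. The identity \eqref{2-2} follows by an analogous time-reversed argument, so I focus on \eqref{2-1}.

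First I fix $x_0\in M$, $u_0\in\R$ and $T>0$, and on the space $X_T:=C(M\times[0,T],\R)$ equipped with the weighted norm $\|h\|_K:=\sup_{(x,t)}e^{-Kt}|h(x,t)|$, with $K>\kappa$ to be chosen, I define
\[
(\mathcal{F}h)(x,t):=u_0+\inf_{\gamma(0)=x_0,\,\gamma(t)=x}\int_0^t L(\gamma(\tau),\dot{\gamma}(\tau),h(\gamma(\tau),\tau))\,d\tau,
\]
the infimum being taken over Lipschitz curves. The superlinearity (L2) together with an a~priori bound from any admissible competitor (a geodesic joining $x_0$ to $x$) confines essentially minimizing $\gamma$ into a compact set of $TM$, so $\mathcal{F}h$ is well-defined and continuous. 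Using (L3), a direct computation gives $|\mathcal{F}h_1(x,t)-\mathcal{F}h_2(x,t)|\le\kappa\int_0^t e^{K\tau}\,d\tau\cdot\|h_1-h_2\|_K$, whence $\|\mathcal{F}h_1-\mathcal{F}h_2\|_K\le\frac{\kappa}{K}\|h_1-h_2\|_K$. Choosing $K>\kappa$ makes $\mathcal{F}$ a contraction and the Banach fixed-point theorem yields a unique $h_{x_0,u_0}\in X_T$ satisfying \eqref{2-1}; a dynamic-programming/concatenation identity then extends the solution to arbitrary $T$.

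Next, for fixed $(x,t)$ the functional $\gamma\mapsto\int_0^t L(\gamma,\dot{\gamma},h_{x_0,u_0}(\gamma,\cdot))\,d\tau$ is lower semicontinuous and coercive by (L1)--(L2), so Tonelli's theorem produces a Lipschitz minimizer $\gamma_1$. Computing the first variation, while keeping track of the dependence on $h_{x_0,u_0}$ along the curve, yields a weak Euler--Lagrange equation; the strict convexity (L1) then upgrades $\gamma_1$ to $C^1$. Setting $u_1(s):=h_{x_0,u_0}(\gamma_1(s),s)$ and $p_1(s):=\frac{\partial L}{\partial\dot{x}}(\gamma_1(s),\dot{\gamma}_1(s),u_1(s))$, the dynamic-programming identity
\[
h_{x_0,u_0}(\gamma_1(s+\sigma),s+\sigma)=h_{x_0,u_0}(\gamma_1(s),s)+\int_s^{s+\sigma}L(\gamma_1,\dot{\gamma}_1,u_1)\,d\tau
\]
differentiated in $\sigma$ at $\sigma=0$ gives $\dot{u}_1=L(x_1,\dot{x}_1,u_1)$, which via the Legendre transform $p_1\cdot\dot{x}_1-L=H$ is the $\dot{u}$-equation of \eqref{eq:ode}. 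The $\dot{x}$-equation is immediate from the definition of $p_1$, while combining the Euler--Lagrange relation with the Legendre identities $\partial_x L=-\partial_x H$ and $\partial_u L=-\partial_u H$ (evaluated in conjugate variables) and the equation $\dot{u}_1=L$ yields the $\dot{p}$-equation with its characteristic $-\partial_u H\cdot p$ contact term. The boundary conditions follow from the endpoints of $\gamma_1$ and from \eqref{2-1} evaluated as $s\to 0^+$.

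The main obstacle is the variational step: since $L$ is evaluated along $\gamma_1$ with its third argument filled by $h_{x_0,u_0}(\gamma_1(\cdot),\cdot)$ rather than by an independent function of $\tau$, the first variation is nonstandard and the $u$-dependence couples the variation of the curve to a variation of the integrand via $h_{x_0,u_0}$ itself. To make this rigorous one first needs to prove that $h_{x_0,u_0}$ is locally Lipschitz in $(x,t)$ for $t>0$, a byproduct of the fixed-point construction combined with the compactness bounds on minimizing curves. Once Lipschitz regularity is in hand, a Gronwall-type estimate shows that the implicit $h$-contribution reduces to a lower-order perturbation that is absorbed into the standard Euler--Lagrange equation, yielding precisely the contact Hamiltonian vector field rather than the classical one.
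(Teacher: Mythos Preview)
The paper does not prove this proposition; it is quoted in the Appendix as a known result from \cite{WWY,WWY1}, so there is no in-paper argument to compare against. I will therefore assess your outline on its own terms.

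There are two genuine gaps. First, your fixed-point space $X_T=C(M\times[0,T],\R)$ cannot work: for $x\neq x_0$ there are no admissible curves when $t=0$, and for small $t>0$ superlinearity forces $(\mathcal{F}h)(x,t)\to+\infty$ as $t\to0^+$. Hence $\mathcal{F}$ does not map $X_T$ into itself and the contraction argument is ill-posed as stated. The standard remedy (and what underlies the construction in \cite{WWY}) is to bypass a global fixed point entirely: for each Lipschitz $\gamma$ with $\gamma(0)=x_0$ one solves the Carath\'eodory ODE $\dot u_\gamma=L(\gamma,\dot\gamma,u_\gamma)$, $u_\gamma(0)=u_0$, and then sets $h_{x_0,u_0}(x,t)=\inf_{\gamma(t)=x}u_\gamma(t)$. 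This Herglotz formulation is equivalent to \eqref{2-1} and avoids the singularity at $t=0$.

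Second, and more seriously, your derivation of the $\dot p$-equation is not rigorous. Treating $h_{x_0,u_0}$ as a fixed function of $(x,\tau)$ and taking a first variation would require $\partial_x h_{x_0,u_0}$, which you do not have, and would in any case produce an extra term $\partial_u L\cdot\partial_x h$ rather than the contact term $\partial_u L\cdot p$. Calling the discrepancy a ``lower-order perturbation'' absorbed by Gronwall is misleading: $-\partial_u H\cdot p$ is of the same order as $-\partial_x H$. The correct route is again Herglotz: once you know $\dot u_1=L(\gamma_1,\dot\gamma_1,u_1)$ along a minimizer, you are minimizing the terminal value of an ODE-constrained functional, and a standard variation $\gamma_\epsilon$ with associated $u_\epsilon$ (differentiating the ODE in $\epsilon$ and using an integrating factor $e^{-\int_0^s\partial_u L}$) yields the generalized Euler--Lagrange equation $\frac{d}{ds}\partial_{\dot x}L=\partial_x L+\partial_u L\cdot\partial_{\dot x}L$, which under the Legendre transform is exactly $\dot p=-\partial_x H-\partial_u H\cdot p$. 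This is the mechanism that produces the contact term, not a perturbation argument.
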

We call $h_{x_0,u_0}(x,t)$ (resp. $h^{x_0,u_0}(x,t)$) a  forward (resp. backward) implicit action function associated with $L$
and the curves achieving the infimums in (\ref{2-1}) (resp. \eqref{2-2}) minimizers of $h_{x_0,u_0}(x,t)$ (resp. $h^{x_0,u_0}(x,t)$). The relation between forward and backward implicit action functions is as follows: for any given $x_0$, $x\in M$, $u_0$, $u\in\mathbb{R}$ and $t>0$,  
\begin{align*}\label{2-r}
	h_{x_0,u_0}(x,t)=u\quad  \text{if and only if}\quad  h^{x,u}(x_0,t)=u_0.
\end{align*}

\medskip

\noindent $\bullet$ {\bf Implicit action functions}. 
We now collect some basic properties of implicit action functions.

\begin{proposition}\label{pr-af} \ 
	\begin{itemize}
		\item [(1)] (Monotonicity).
		Given $x_{0} \in M, u_{0}, u_{1}, u_{2} \in \mathbb{R}$, Lagrangians $ L_{1}$ and $L_{2}$ satisfying (L1)-(L3),
		\begin{itemize}
			\item [(i)] if $u_{1}<u_{2}$, then $h_{x_{0}, u_{1}}(x, t)<h_{x_{0}, u_{2}}(x, t)$, for all $(x, t) \in M \times(0,+\infty)$;
			\item [(ii)] if $L_{1}<L_{2}$, then $h_{x_{0}, u_{0}}^{L_{1}}(x, t)<h_{x_{0}, u_{0}}^{L_{2}}(x, t)$, for all $(x, t) \in M \times(0,+\infty)$
			where $h_{x_{0}, u_{0}}^{L_{i}}(x, t)$ denotes the forward implicit action function associated with $L_{i}, i=1,2 .$
		\end{itemize}
		\item [(2)] (Lipschitz continuity).
		The function $(x_0,u_0,x,t)\mapsto h_{x_0,u_0}(x,t)$ is  Lipschitz  continuous on $M\times[a,b]\times M\times[c,d]$ for all real numbers $a$, $b$, $c$, $d$
		with $a<b$ and $0<c<d$.
		\item [(3)] (Minimality).
		Given $x_0$, $x\in M$, $u_0\in\mathbb{R}$ and $t>0$, let
		$S^{x,t}_{x_0,u_0}$ be the set of the solutions $(x(s),p(s),u(s))$ of (\ref{eq:ode}) on $[0,t]$ with $x(0)=x_0$, $x(t)=x$, $u(0)=u_0$.
		Then
		\[
		h_{x_0,u_0}(x,t)=\inf\{u(t): (x(s),p(s),u(s))\in S^{x,t}_{x_0,u_0}\}, \quad \forall (x,t)\in M\times(0,+\infty).
		\]
		\item [(4)] (Markov property).
		Given $x_0\in M$, $u_0\in\mathbb{R}$, 
		\[
		h_{x_0,u_0}(x,t+s)=\inf_{y\in M}h_{y,h_{x_0,u_0}(y,t)}(x,s)
		\]
		for all  $s$, $t>0$ and all $x\in M$. Moreover, the infimum is attained at $y$ if and only if there exists a minimizer $\gamma$ of $h_{x_0,u_0}(x,t+s)$ with $\gamma(t)=y$.
		\item [(5)] (Reversibility).
		Given $x_0$, $x\in M$ and $t>0$, for each $u\in \mathbb{R}$, there exists a unique $u_0\in \mathbb{R}$ such that
		\[
		h_{x_0,u_0}(x,t)=u.
		\]
	\end{itemize}
\end{proposition}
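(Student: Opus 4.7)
The plan is to derive items (1)--(5) from the implicit variational principle (Proposition \ref{IVP}) together with the Lipschitz bound (L3), using Gr\"onwall-type comparison along curves as the main workhorse. For any Lipschitz curve $\gamma:[0,t]\to M$ from $x_0$ to $x$, denote by $y^\gamma_{u_0}(\tau)$ the unique solution of the scalar ODE $\dot y=L(\gamma(\tau),\dot\gamma(\tau),y)$ with $y(0)=u_0$; by (\ref{2-1}) one has $h_{x_0,u_0}(x,t)=\inf_\gamma y^\gamma_{u_0}(t)$, and by (L3) the coefficient $\partial_y L$ is uniformly bounded by $\kappa$.

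For (1)(i), the comparison inequality $y^\gamma_{u_2}(\tau)-y^\gamma_{u_1}(\tau)\geqslant(u_2-u_1)e^{-\kappa\tau}>0$ is preserved under taking the infimum over $\gamma$; item (1)(ii) is proved analogously, starting from the strict inequality $L_1<L_2$ and applying Gr\"onwall along a common curve. For (5), the same estimate shows that $u_0\mapsto h_{x_0,u_0}(x,t)$ is strictly increasing and continuous, while the two-sided bounds $e^{-\kappa t}(u_0-C_t)\leqslant y^\gamma_{u_0}(t)\leqslant e^{\kappa t}(u_0+C_t)$ force it to sweep out all of $\R$ as $u_0\to\pm\infty$; the intermediate value theorem then supplies the unique $u_0$.

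Items (3) and (4) follow from the structure of minimizers provided by Proposition \ref{IVP}. For (3), the proposition asserts that each minimizer $\gamma$ of $h_{x_0,u_0}(x,t)$ lifts to a Hamiltonian orbit in $S^{x,t}_{x_0,u_0}$ whose terminal $u$-coordinate equals $h_{x_0,u_0}(x,t)$, and conversely every orbit in $S^{x,t}_{x_0,u_0}$ yields an admissible competitor, so the two infima coincide. For (4), the ``$\leqslant$'' direction is a concatenation: a minimizer of $h_{x_0,u_0}(y,t)$ glued to a minimizer of $h_{y,h_{x_0,u_0}(y,t)}(x,s)$ is admissible for $h_{x_0,u_0}(x,t+s)$. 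The ``$\geqslant$'' direction takes a minimizer of $h_{x_0,u_0}(x,t+s)$, splits it at time $t$ at $y=\gamma(t)$, identifies the value at $y$ with $h_{x_0,u_0}(y,t)$ by (\ref{2-1}) applied to the first piece, and observes that the tail competes for $h_{y,h_{x_0,u_0}(y,t)}(x,s)$; the characterization of the attaining $y$ then drops out of this argument.

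The main obstacle is (2), joint Lipschitz continuity in $(x_0,u_0,x,t)$. Dependence on $u_0$ is immediate from the $e^{\kappa t}$ Gr\"onwall factor. For the remaining variables the plan is to use superlinearity (L2) to obtain a uniform a priori bound on $\|\dot\gamma\|_\infty$ over all minimizers whose parameters lie in the compact box $M\times[a,b]\times M\times[c,d]$, then pass from a minimizer with old endpoints $(x'_0,x')$ on time interval $[0,t']$ to a competitor for nearby $(x_0,x,t)$ by prepending and appending short corrector segments (a straight segment of length $d(x_0,x'_0)$ and one of length $d(x,x')$) and by linearly reparametrizing onto $[0,t]$. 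The extra cost along each piece is bounded, via the uniform Lipschitz modulus of $L$ on the resulting compact set in $TM\times\R$, by a constant times $|x_0-x'_0|+|x-x'|+|t-t'|$; combining this with the $u_0$-estimate yields the joint Lipschitz bound.
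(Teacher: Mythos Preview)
The paper does not give a proof of Proposition \ref{pr-af}; it is stated in the Appendix as a recollection of known results, with the blanket attribution ``Most of the results in this section can be found in \cite{WWY,WWY1,WWY2,WWY3}.'' So there is no ``paper's own proof'' to compare against. That said, your outline is essentially the approach taken in those cited references.

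One point worth tightening: the identification $h_{x_0,u_0}(x,t)=\inf_\gamma y^\gamma_{u_0}(t)$ is not a direct consequence of the implicit formula (\ref{2-1}) as you phrase it; rather, in \cite{WWY,WWY1} the forward implicit action function is \emph{constructed} as this infimum of ODE solutions along curves, and then shown to satisfy (\ref{2-1}). So you should cite that construction rather than derive the characterization from (\ref{2-1}). Once this is in place, your Gr\"onwall comparisons for (1) and (5), the characteristics argument for (3) (using that along a contact Hamiltonian orbit $\dot u=\langle\dot x,p\rangle-H=L(x,\dot x,u)$), the concatenation/splitting argument for (4), and the a priori speed bound plus endpoint-perturbation scheme for (2) are exactly the standard route taken in \cite{WWY,WWY1}.
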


\medskip

\begin{proposition}\label{pr-af1} \ 
	\begin{itemize}
		\item [(1)]({\it Monotonicity}).
		Given $x_{0} \in M$ and $u_{1}, u_{2} \in \mathbb{R}$, Lagrangians $L_{1}, L_{2}$ satisfying (L1)-(L3),
		\begin{itemize}
			\item [(i)] if $u_{1}<u_{2}$, then $h^{x_{0}, u_{1}}(x, t)<h^{x_{0}, u_{2}}(x, t)$, for all $(x, t) \in M \times(0,+\infty) ;$
			\item [(ii)] if $L_{1}>L_{2}$, then $h_{L_{1}}^{x_{0}, u_{0}}(x, t)<h_{L_{2}}^{x_{0}, u_{0}}(x, t)$, for all $(x, t) \in M \times(0,+\infty)$, where
			$h_{L_{i}}^{x_{0}, u_{0}}(x, t)$ denotes the backward implicit action function associated with $L_{i}$, $i =1,2$.
		\end{itemize}
		\item [(2)] (Lipschitz continuity).
		The function $(x_0,u_0,x,t)\mapsto h^{x_0,u_0}(x,t)$ is  Lipschitz  continuous on $M\times[a,b]\times M\times[c,d]$ for all real numbers $a$, $b$, $c$, $d$
		with $a<b$ and $0<c<d$.
		
		\item [(3)] (Maximality).
		Given $x_0$, $x\in M$, $u_0\in\mathbb{R}$ and $t>0$, let
		$S_{x,t}^{x_0,u_0}$ be the set of the solutions $(x(s),p(s),u(s))$ of  
		(\ref{eq:ode}) on $[0,t]$ with $x(0)=x$, $x(t)=x_0$, $u(t)=u_0$.
		Then
		\[
		h^{x_0,u_0}(x,t)=\sup\{u(0): (x(s),p(s),u(s))\in S_{x,t}^{x_0,u_0}\}, \quad \forall (x,t)\in M\times(0,+\infty).
		\]
		\item [(4)] (Markov property).
		Given $x_0\in M$, $u_0\in\mathbb{R}$, 
		\[
		h^{x_0,u_0}(x,t+s)=\sup_{y\in M}h^{y,h^{x_0,u_0}(y,t)}(x,s)
		\]
		for all  $s$, $t>0$ and all $x\in M$. Moreover, the supremum is attained at $y$ if and only if there exists a minimizer $\gamma$ of $h^{x_0,u_0}(x,t+s)$, such that $\gamma(t)=y$.
		\item [(5)] (Reversibility).
		Given $x_0$, $x\in M$, and $t>0$, for each $u\in \mathbb{R}$, there exists a unique $u_0\in \mathbb{R}$ such that
		\[
		h^{x_0,u_0}(x,t)=u.
		\]
	\end{itemize}
\end{proposition}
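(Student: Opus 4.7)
The plan is to derive every property of the backward implicit action function $h^{x_0,u_0}(x,t)$ from the corresponding property of the forward implicit action $h_{x_0,u_0}(x,t)$ in Proposition \ref{pr-af}, exploiting the duality relation
$$h_{x_0,u_0}(x,t)=u \iff h^{x,u}(x_0,t)=u_0$$
stated in Proposition \ref{IVP}, together with the variational characterization provided by the minimizers $\gamma_1,\gamma_2$ of the implicit variational principles.

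Properties (5) and (1)(i) come essentially for free from this duality. For (5), given $x_0,x,t,u$, the duality forces $u_0=h_{x,u}(x_0,t)$, which exists and is unique; no other $u_0$ can work. For (1)(i), if $u_1<u_2$ and $v_i:=h^{x_0,u_i}(x,t)$, the duality yields $h_{x,v_i}(x_0,t)=u_i$, so $v_1\ge v_2$ would give $u_1=h_{x,v_1}(x_0,t)\ge h_{x,v_2}(x_0,t)=u_2$ by Proposition \ref{pr-af} (1)(i), a contradiction. Property (1)(ii) is an entirely analogous contradiction argument combining Proposition \ref{pr-af} (1)(i) and (1)(ii) applied to the two Lagrangians.

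For (3) \emph{Maximality}, I let $V(x,t)$ denote the supremum on the right-hand side. The inequality $V\ge h^{x_0,u_0}(x,t)$ is witnessed by the minimizer $\gamma_2$ in Proposition \ref{IVP}, whose associated orbit lies in $S_{x,t}^{x_0,u_0}$ with $u(0)=h^{x_0,u_0}(x,t)$. For the reverse, given any orbit in $S_{x,t}^{x_0,u_0}$ with $u(0)=v$, read as a forward trajectory it shows, by Proposition \ref{pr-af} (3), that $u_0=u(t)\ge h_{x,v}(x_0,t)$; if $v>h^{x_0,u_0}(x,t)$, strict forward monotonicity gives $h_{x,v}(x_0,t)>h_{x,h^{x_0,u_0}(x,t)}(x_0,t)=u_0$, a contradiction. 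Property (4) then follows by the standard concatenation/splitting argument at the intermediate time: any orbit in $S_{x,t+s}^{x_0,u_0}$ restricted to $[s,s+t]$ belongs to $S_{x(s),t}^{x_0,u_0}$, so (3) yields $u(s)\le h^{x_0,u_0}(y,t)$ with $y:=x(s)$; applying (1)(i) and (3) on $[0,s]$ gives $u(0)\le h^{y,h^{x_0,u_0}(y,t)}(x,s)$. The reverse inequality is obtained by concatenating the two maximizing orbits supplied by Proposition \ref{IVP} for $h^{x_0,u_0}(y,t)$ and $h^{y,h^{x_0,u_0}(y,t)}(x,s)$ into a single orbit in $S_{x,t+s}^{x_0,u_0}$.

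Property (2) \emph{Lipschitz continuity} is the main technical obstacle, and I would tackle it last via the duality $h_{x,u}(x_0,t)=u_0$. Proposition \ref{pr-af} (2) gives Lipschitz dependence of the forward action on $(x,u,x_0,t)$, so the task reduces to inverting the relation in the $u$-variable with a Lipschitz inverse. The crucial ingredient is a two-sided bound of the form $e^{-\kappa t}\le\partial h_{x_0,u_0}(x,t)/\partial u_0\le e^{\kappa t}$ on each strip $t\in[c,d]$, which I expect to follow by Gronwall's inequality applied to the implicit equation \eqref{2-1} using (H3): differentiating formally in $u_0$ (justified along minimizers where $h_{x_0,u_0}$ is smooth, then extended by density) produces a scalar ODE with coefficient bounded by $\kappa$. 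With this quantitative monotonicity in hand, solving $h_{x,u}(x_0,t)=u_0$ for $u$ gives a Lipschitz function of the parameters, i.e.\ Lipschitz continuity of $h^{x_0,u_0}(x,t)$ on $M\times[a,b]\times M\times[c,d]$. The whole argument is the mirror image of the one used for Proposition \ref{pr-af}, and the hardest single point is establishing the uniform positive lower bound on $\partial h_{x_0,u_0}/\partial u_0$, since below that threshold the duality cannot be inverted Lipschitz-continuously.
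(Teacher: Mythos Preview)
The paper does not supply its own proof of Proposition~\ref{pr-af1}: the proposition is placed in the Appendix as a recalled result, with the blanket attribution ``Most of the results in this section can be found in \cite{WWY,WWY1,WWY2,WWY3}.'' There is therefore no in-paper argument to compare your proposal against.

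That said, your strategy of deducing each backward property from its forward counterpart via the duality $h_{x_0,u_0}(x,t)=u \Leftrightarrow h^{x,u}(x_0,t)=u_0$ is sound and is indeed the standard route in the cited works. Your arguments for (5), (1)(i)--(ii), (3), and (4) are correct as sketched. For (2), your plan is also the right one: the quantitative two-sided bound $e^{-\kappa t}|u_1-u_2|\le |h_{x_0,u_1}(x,t)-h_{x_0,u_2}(x,t)|\le e^{\kappa t}|u_1-u_2|$ is exactly what is established in \cite{WWY1} (via a Gronwall argument along minimizers using (L3)), and with it the inversion of the duality gives the Lipschitz estimate directly. One small caution: the differentiability of $u_0\mapsto h_{x_0,u_0}(x,t)$ need not hold everywhere, so it is cleaner to prove the two-sided Lipschitz bound directly for differences rather than to ``differentiate formally and extend by density''; this avoids any regularity issues and is how the original references proceed.
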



\medskip

\noindent $\bullet$ {\bf Solution semigroups}.

Let us recall two  semigroups of operators introduced in \cite{WWY1}.  Define a family of nonlinear operators $\{T^-_t\}_{t\geqslant 0}$ from $C(M,\mathbb{R})$ to itself as follows. For each $\varphi\in C(M,\mathbb{R})$, denote by $(x,t)\mapsto T^-_t\varphi(x)$ the unique continuous function on $ (x,t)\in M\times[0,+\infty)$ such that
\begin{equation*}\label{eq:semigroup}
	T^-_t\varphi(x)=\inf_{\gamma}\left\{\varphi(\gamma(0))+\int_0^tL(\gamma(\tau),\dot{\gamma}(\tau),T^-_\tau\varphi(\gamma(\tau)))d\tau\right\},
\end{equation*}
where the infimum is taken among the absolutely continuous curves $\gamma:[0,t]\to M$ with $\gamma(t)=x$.  It was also proved  in \cite{WWY1} that $\{T^-_t\}_{t\geqslant 0}$ is a semigroup of operators and the function $(x,t)\mapsto T^-_t\varphi(x)$ is a viscosity solution of $\partial_tw+H(x,\partial_xw,w)=0$ with initial condition $w(x,0)=\varphi(x)$. Thus, we call $\{T^-_t\}_{t\geqslant 0}$ the backward solution semigroup.

Similarly, one can define another semigroup of operators $\{T^+_t\}_{t\geq 0}$, called the forward solution semigroup 
by
\begin{equation*}\label{2-4}
	T^+_t\varphi(x)=\sup_{\gamma}\left\{\varphi(\gamma(t))-\int_0^tL(\gamma(\tau),\dot{\gamma}(\tau),T^+_{t-\tau}\varphi(\gamma(\tau)))d\tau\right\},
\end{equation*}
where the supremum is taken among curves $\gamma\in C^{ac}([0,t],M)$ with $\gamma(0)=x$.

We collect some basic properties of the solution semigroups. 

\begin{proposition}\label{pr-sg}
	Let $\varphi$, $\psi\in C(M,\R)$. 
	\begin{itemize}
		\item [(1)](Monotonicity). If $\psi<\varphi$, then $T^{\pm}_t\psi< T^{\pm}_t\varphi$, $\forall t\geqslant 0$.
		\item [(2)](Local Lipschitz continuity). The function $(x,t)\mapsto T^{\pm}_t\varphi(x)$ is locally Lipschitz on $M\times (0,+\infty)$.
		\item[(3)]($e^{\kappa t}$-expansiveness). $\|T^{\pm}_t\varphi-T^{\pm}_t\psi\|_\infty\leqslant e^{\kappa t}\cdot\|\varphi-\psi\|_\infty$,  $\forall t\geqslant 0$.
		\item[(4)] (Continuity at the origin). $\lim_{t\rightarrow0^+}T^{\pm}_t\varphi=\varphi$.
		\item[(5)] (Representation formula). For each  $\varphi\in C(M,\R)$,	
		\begin{itemize}
			\item [(i)]	$T^-_t\varphi(x)=\inf_{y\in M}h_{y,\varphi(y)}(x,t)$,\quad  $\forall (x,t)\in M\times(0,+\infty)$;
			\item [(ii)] $T^+_t\varphi(x)=\sup_{y\in M}h^{y,\varphi(y)}(x,t)$,\quad  $\forall (x,t)\in M\times(0,+\infty)$.
		\end{itemize}
		
		\item[(6)] (Semigroup). $\{T^{\pm}_t\}_{t\geqslant 0}$ are one-parameter semigroup of operators. For all $x_0$, $x\in M$, all $u_0\in\mathbb{R}$ and all $s$, $t>0$, 	\begin{itemize}
			\item [(i)] $
			T^-_sh_{x_0,u_0}(x,t)=h_{x_0,u_0}(x,t+s)$,\quad  $T^-_{t+s}\varphi(x)=\inf_{y\in M}h_{y,T^-_s\varphi(y)}(x,t)$;
			\item [(ii)] $
			T^+_sh^{x_0,u_0}(x,t)=h^{x_0,u_0}(x,t+s)$, \quad  $T^+_{t+s}\varphi(x)=\sup_{y\in M}h^{y,T^+_s\varphi(y)}(x,t)$.
		\end{itemize}
	\end{itemize}
\end{proposition}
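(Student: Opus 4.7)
The plan is to treat the representation formula (item (5)) as the structural bridge: once $T^\pm_t\varphi$ is identified as an infimum/supremum of implicit action functions, items (1)–(4) and the explicit identities in (6) are reduced to the already-established properties of $h_{x_0,u_0}$ and $h^{x_0,u_0}$ in Propositions \ref{pr-af} and \ref{pr-af1}. I will carry out everything for the backward semigroup $T^-_t$; the arguments for $T^+_t$ are symmetric, using Proposition \ref{pr-af1} in place of Proposition \ref{pr-af} and swapping $\inf$/$\sup$.

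First I would establish (5)(i). Define $W(x,t):=\inf_{y\in M}h_{y,\varphi(y)}(x,t)$. For the inequality $T^-_t\varphi(x)\leqslant W(x,t)$: fix $y\in M$, take a minimizer $\gamma:[0,t]\to M$ of $h_{y,\varphi(y)}(x,t)$ from Proposition \ref{IVP}, and set $u(\tau):=h_{y,\varphi(y)}(\gamma(\tau),\tau)$. Using the Markov property (Proposition \ref{pr-af}(4)) along $\gamma$, one checks that $u(\tau)=\varphi(y)+\int_0^\tau L(\gamma,\dot\gamma,u)\,ds$, and hence that $u(\tau)=T^-_\tau\varphi(\gamma(\tau))$ is admissible in the fixed-point defining equation for $T^-_t\varphi(x)$, giving $T^-_t\varphi(x)\leqslant u(t)=h_{y,\varphi(y)}(x,t)$. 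Taking the infimum in $y$ yields the inequality. For the reverse inequality, I would exploit compactness of $M$ to extract a minimizer of the defining problem for $T^-_t\varphi(x)$ (standard Tonelli/superlinearity argument), whose starting point $y_0$ satisfies $T^-_t\varphi(x)=\varphi(y_0)+\int_0^t L\,d\tau\geqslant h_{y_0,\varphi(y_0)}(x,t)\geqslant W(x,t)$ by the minimality property (Proposition \ref{pr-af}(3)). The circularity inherent in the definition of $T^-_t$ is the main technical obstacle here; it is resolved by showing that both $W$ and $T^-_t\varphi$ solve the same implicit fixed-point equation and then invoking the uniqueness established in \cite{WWY1}.

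With (5) in hand the remaining items unfold cleanly. For \textbf{(1)} monotonicity, $\psi<\varphi$ on the compact manifold $M$ gives $\psi<\varphi-\eta$ for some $\eta>0$, and Proposition \ref{pr-af}(1)(i) yields $h_{y,\psi(y)}(x,t)<h_{y,\varphi(y)}(x,t)-\eta'$ uniformly in $y$ (the constant $\eta'>0$ comes from Gronwall applied to the parameter dependence on the initial value via $|L_u|\leqslant\kappa$), so the infima inherit the strict inequality. For \textbf{(2)} local Lipschitz continuity, Proposition \ref{pr-af}(2) gives joint Lipschitz continuity of $(x_0,u_0,x,t)\mapsto h_{x_0,u_0}(x,t)$ on compact sets; since $\varphi$ is continuous on compact $M$, $y\mapsto h_{y,\varphi(y)}(x,t)$ is jointly Lipschitz in $(x,t)$ on $M\times[a,b]$, and the infimum of a uniformly Lipschitz family is Lipschitz. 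For \textbf{(3)} the $e^{\kappa t}$-expansiveness, the essential lemma is that for any $y$ and any real $c$,
\[
|h_{y,u_0+c}(x,t)-h_{y,u_0}(x,t)|\leqslant e^{\kappa t}|c|,
\]
which I would prove by applying Gronwall to the ODE $\dot u=L(\gamma,\dot\gamma,u)$ satisfied along a common minimizer and invoking $|L_u|\leqslant\kappa$; combining this with (5) and the inequality $\varphi\leqslant\psi+\|\varphi-\psi\|_\infty$ gives the bound. For \textbf{(4)} continuity at $t=0^+$, one uses that $h_{y,\varphi(y)}(x,t)\to\varphi(y)$ uniformly as $t\to 0^+$ when $y=x$ (by taking constant test curves) and that far-away $y$ cannot achieve the infimum for small $t$ thanks to superlinearity (L2).

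Finally, \textbf{(6)} is a direct combinatorial consequence of (5) and the Markov property. For the first identity, apply (5)(i) to the continuous initial datum $\varphi_t(\cdot):=h_{x_0,u_0}(\cdot,t)$:
\[
T^-_sh_{x_0,u_0}(\cdot,t)(x)=\inf_{y\in M}h_{y,\,h_{x_0,u_0}(y,t)}(x,s)=h_{x_0,u_0}(x,t+s)
\]
by Proposition \ref{pr-af}(4). For the second, writing $T^-_{t+s}\varphi(x)=\inf_y h_{y,\varphi(y)}(x,t+s)$ and applying Markov again, $h_{y,\varphi(y)}(x,t+s)=\inf_{z\in M}h_{z,h_{y,\varphi(y)}(z,s)}(x,t)$; swapping the double infimum and recognizing the inner infimum as $T^-_s\varphi(z)$ gives the claim. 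The semigroup property $T^-_{t+s}=T^-_t\circ T^-_s$ falls out of the latter identity. I expect the main obstacle to be the uniqueness/circularity issue in (5); everything else is a fairly mechanical transfer from the already-established properties of $h_{x_0,u_0}$.
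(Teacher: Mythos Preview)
The paper does not prove this proposition. Proposition~\ref{pr-sg} sits in the Appendix under the heading ``Solution semigroups'', where the authors explicitly say they are \emph{recalling} known results, with the sentence ``Most of the results in this section can be found in \cite{WWY,WWY1,WWY2,WWY3}.'' There is no proof or proof sketch in the paper to compare against; the proposition is quoted from the earlier literature and used as a black box.

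That said, a comment on your sketch. Your overall architecture---derive the representation formula (5) first, then read off (1)--(4) and (6) from the known properties of the implicit action functions---is the right one, and matches how these facts are organised in \cite{WWY1}. But your direct argument for the inequality $T^-_t\varphi(x)\leqslant h_{y,\varphi(y)}(x,t)$ is not quite sound as written: from $u(\tau)=\varphi(y)+\int_0^\tau L(\gamma,\dot\gamma,u)\,ds$ you cannot conclude $u(\tau)=T^-_\tau\varphi(\gamma(\tau))$, because the defining integrand of $T^-_t\varphi$ contains $T^-_\tau\varphi(\gamma(\tau))$ itself, not $u(\tau)$. You recognise this (``the circularity inherent in the definition of $T^-_t$ is the main technical obstacle''), and your fallback---show that $W(x,t):=\inf_y h_{y,\varphi(y)}(x,t)$ satisfies the same implicit fixed-point relation that uniquely determines $T^-_t\varphi$, then invoke the uniqueness from \cite{WWY1}---is the correct route. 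I would drop the first attempt and go straight to the uniqueness argument; the rest of your reductions for (1)--(4) and (6) are fine.
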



\medskip

\noindent $\bullet$ {\bf Weak KAM solutions}. 
Following Fathi (see, for instance, \cite{Fat-b}), one can define weak KAM solutions of  
\begin{align}\label{wkam}
	H(x,Du(x),u(x))=0.	
\end{align}

\begin{definition}
	\label{bwkam}
	A function $u\in C(M,\R)$ is called a backward weak KAM solution of \eqref{wkam} if
	\begin{itemize}
		\item [(1)] for each continuous piecewise $C^1$ curve $\gamma:[t_1,t_2]\rightarrow M$, we have
		\begin{align}\label{do}
			u(\gamma(t_2))-u(\gamma(t_1))\leqslant\int_{t_1}^{t_2}L(\gamma(s),\dot{\gamma}(s),u(\gamma(s)))ds;
		\end{align}
		\item [(2)] for each $x\in M$, there exists a $C^1$ curve $\gamma:(-\infty,0]\rightarrow M$ with $\gamma(0)=x$ such that
		\begin{align}\label{cali1}
			u(x)-u(\gamma(t))=\int^{0}_{t}L(\gamma(s),\dot{\gamma}(s),u(\gamma(s)))ds, \quad \forall t<0.
		\end{align}
	\end{itemize}
	
	Similarly, 	a function $v\in C(M,\R)$ is called a forward weak KAM solution of  \eqref{wkam} if it satisfies (1) and  
	for each $x\in M$, there exists a $C^1$ curve $\gamma:[0,+\infty)\rightarrow M$ with $\gamma(0)=x$ such that
	\begin{align}\label{cali2}
		v(\gamma(t))-v(x)=\int_{0}^{t}L(\gamma(s),\dot{\gamma}(s),v(\gamma(s)))ds,\quad \forall t>0.
	\end{align}
	We say that $u$ in \eqref{do} is a dominated function by $L$. We call curves satisfying \eqref{cali1} (resp. \eqref{cali2}),  $(u,L,0)$-calibrated curves (resp. $(v,L,0)$-calibrated curves). We use $\mathcal{S}_-$ (resp. $\mathcal{S}_+$) to denote the set of all backward (resp. forward) weak KAM solutions.
\end{definition}

\begin{proposition}\label{pr-fix}\ 
	\begin{itemize}
		\item
		[(1)] $u\in\mathcal{S}_-$ if and only if $T^-_tu=u$ for all $t\geqslant 0$.
		\item [(2)] $v\in\mathcal{S}_+$ if and only if $T^+_tv=v$ for all $t\geqslant 0$.
	\end{itemize}
\end{proposition}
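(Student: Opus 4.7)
The plan is to prove (1) by sandwiching $T^-_t u$ between $u$ and $u$ using the representation formula $T^-_t u(x) = \inf_{y \in M} h_{y, u(y)}(x, t)$ from Proposition \ref{pr-sg}(5)(i) together with the minimality characterization of $h_{y, u_0}$ in Proposition \ref{pr-af}(3); part (2) will then follow by a fully dual argument based on $T^+_t v(x) = \sup_y h^{y, v(y)}(x, t)$ and Proposition \ref{pr-af1}(3).

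For the forward implication in (1), assume $u \in \mathcal{S}_-$. For the upper bound $T^-_t u \leq u$, I would use the calibrated curve $\gamma : (-\infty, 0] \to M$ with $\gamma(0) = x$ from \eqref{cali1}: the Hamiltonian lift of $\gamma|_{[-t, 0]}$, reparametrised on $[0, t]$, is a solution of \eqref{eq:ode} whose $u$-component coincides with $u \circ \gamma$ thanks to the calibration identity combined with $\dot{\mathfrak{u}} = L(x, \dot x, \mathfrak{u})$, so Proposition \ref{pr-af}(3) gives $h_{\gamma(-t), u(\gamma(-t))}(x, t) \leq u(x)$ and hence $T^-_t u(x) \leq u(x)$. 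For the lower bound, I would show $h_{y, u(y)}(x, t) \geq u(x)$ for every $y$. Fix any solution $(x(s), p(s), \mathfrak{u}(s))$ of \eqref{eq:ode} with $x(0) = y$, $x(t) = x$, $\mathfrak{u}(0) = u(y)$, and set $g(s) := u(x(s)) - \mathfrak{u}(s)$. Since $x(\cdot)$ is $C^1$, the domination inequality \eqref{do} combined with $\mathfrak{u}(s) - u(y) = \int_0^s L(x, \dot x, \mathfrak{u})\, d\tau$ yields $g(b) - g(a) \leq \kappa \int_a^b |g(\tau)|\, d\tau$ for all $0 \leq a \leq b \leq t$, using (L3). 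Ruling out by contradiction any maximal interval on which $g > 0$ (where the absolute values drop and classical Gronwall applies), I obtain $g \leq 0$ on $[0, t]$, and Proposition \ref{pr-af}(3) gives $u(x) \leq \mathfrak{u}(t)$; infimizing over such solutions yields $h_{y, u(y)}(x, t) \geq u(x)$, hence $T^-_t u(x) \geq u(x)$.

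For the converse in (1), assume $T^-_t u = u$ for all $t \geq 0$. Substituting into the defining equation of $T^-_t$ produces the explicit identity $u(x) = \inf_{\gamma(t) = x} \{u(\gamma(0)) + \int_0^t L(\gamma, \dot\gamma, u(\gamma(\tau)))\, d\tau\}$ for every $t > 0$. Applying the resulting inequality to an arbitrary piecewise $C^1$ curve on $[t_1, t_2]$ and time-shifting gives the domination property \eqref{do}. To produce a calibrated curve at $x$, I would select, for each $n \in \mathbb{N}$, a minimizer $\gamma_n : [0, n] \to M$ of this variational problem with $\gamma_n(n) = x$; each $\gamma_n$ is a $C^1$ extremal whose Hamiltonian lift solves \eqref{eq:ode}, and superlinearity (H2) together with the uniform bound $\|u\|_\infty < +\infty$ furnishes uniform Lipschitz control on $\gamma_n$ independent of $n$. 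A diagonal Ascoli-Arzel\`a extraction then produces a limit curve $\gamma : (-\infty, 0] \to M$ with $\gamma(0) = x$, and passing to the limit in the minimizing identities for $\gamma_n$ yields \eqref{cali1}, so $u \in \mathcal{S}_-$. Part (2) is obtained by the same strategy in dual form, exchanging infima for suprema, using Proposition \ref{pr-sg}(5)(ii) and the maximality property in Proposition \ref{pr-af1}(3), and using a forward-calibrated curve $\gamma : [0, +\infty) \to M$ as in \eqref{cali2}; the sign of the corresponding Gronwall comparison is reversed accordingly.

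The main obstacle I anticipate is the Gronwall-type comparison for $g$: because $u$ is only continuous, $u \circ x(\cdot)$ need not be differentiable and the domination inequality is merely one-sided, so classical Gronwall does not apply directly to the function $u \circ x$ itself. Handling this via a maximal-interval argument is the technical heart of the proof, and it is precisely where hypothesis (L3) (the Lipschitz bound $|\partial L/\partial u| \leq \kappa$) is used. A secondary subtlety, arising in the limit step of the converse, is the justification of uniform regularity of the finite-time minimizers $\gamma_n$; I would address this via the standard weak KAM fact that extremals with bounded endpoint $u$-values have uniformly bounded speed on any compact time window, so that the diagonal Ascoli-Arzel\`a limit indeed produces a $C^1$ curve on $(-\infty, 0]$.
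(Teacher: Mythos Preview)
The paper does not prove Proposition~\ref{pr-fix}: it is stated in the Appendix as one of several known results recalled from \cite{WWY,WWY1,WWY2,WWY3}, without any accompanying argument. So there is no ``paper's own proof'' to compare against.

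That said, your outline is the standard weak KAM argument and is essentially sound. A few comments. For the lower bound in the forward implication, the key identity $\dot{\mathfrak u}(s)=L(x(s),\dot x(s),\mathfrak u(s))$ along solutions of \eqref{eq:ode} follows from $\dot u=\tfrac{\partial H}{\partial p}\cdot p-H$ and the Legendre duality, so your difference $g(s)=u(x(s))-\mathfrak u(s)$ indeed satisfies $g(b)-g(a)\leqslant\kappa\int_a^b|g|\,d\tau$ with $g(0)=0$; the maximal-interval Gronwall you describe then forces $g\leqslant 0$, and this is the correct way around the lack of differentiability of $u\circ x$. For the converse, once $T^-_t u=u$ is fed back into the implicit fixed-point definition of $T^-_t$, the domination inequality and the existence of finite-time minimizers are immediate, and the diagonal extraction of a backward calibrated curve is the usual a priori Lipschitz/compactness argument (the uniform speed bound is exactly the weak KAM lemma you cite). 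Your identification of (L3) as the crucial hypothesis in the comparison step, and of the equicontinuity of minimizers as the point needing care in the limit, is accurate.
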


\medskip

Let $\Phi_t^H$ denote the local flow of \eqref{eq:ode}.
\begin{definition}
	\begin{itemize}
		\item[(1)] (Globally minimizing orbits)
		A curve $(x(\cdot),u(\cdot)):\R \to M \times \R $ is called globally minimizing , if it is locally Lipschitz and for each $t_1<t_2\in\R$, there holds 
		$$
		u(t_2)=h_{x(t_1),u(t_1)}(x(t_2),t_2-t_1 ).
		$$
		\item[(2)](Static curves) A curve $(x(\cdot),u(\cdot)):\R \to M \times \R $ is called static, if it is globally minimizing and for each $t_1,t_2\in \R$, there holds
		$$
		u(t_2)=\inf_{s>0} h_{x(t_1),u(t_1)}(x(t_2),s).
		$$
		If a curve $(x(\cdot),u(\cdot)):M \times \R$ is static, then $(x(t),p(t),u(t))$ with $t\in \R$  is an orbit of $\Phi_t^H$, where $p(t)=\frac{\partial L}{\partial \dot{x}}(x(t),\dot x(t),u(t )) $. We call it a static orbit of $\Phi_t^H$.
	\end{itemize}
\end{definition}

\begin{definition}[Aubry set]\label{audeine}
	We call the set of all static orbits Aubry set of $H$, denoted by $\tilde{\mathcal{A}}$. We call $\mathcal{A}:=\pi\tilde{\mathcal{A}}$ the projected Aubry set, where $\pi:T^*M\times\R\rightarrow M$ denotes the canonical projection.
\end{definition}

We also call $\bar{\mathcal{L}}(\tilde{\mathcal{A}})\subset TM\times\R$ the Aubry set.

\section{Acknowledgements} 
Kaizhi Wang is supported by NSFC Grant No. 12171315, 11931016.
Jun Yan is supported by NSFC Grant No.  12171096, 11790273.

\medskip

\end{document}